\documentclass[11pt]{amsart}
\usepackage{amsmath}
\usepackage{amssymb}
\usepackage{graphicx}
\usepackage{mathrsfs}
\usepackage{tikz-cd}
\usepackage{xcolor}
\usepackage{tikz}
\usepackage{lineno}
\usepackage[utf8]{inputenc}
\usepackage[english]{babel}
\usepackage{amsthm}
\usepackage{amsbsy}

\newtheorem{thm}{Theorem}[section]
\newtheorem{prop}[thm]{Proposition}
\newtheorem{lemma}[thm]{Lemma}
\newtheorem{cor}[thm]{Corollary}

\theoremstyle{definition}
\newtheorem{defn}[thm]{Definition}
\newtheorem{rmk}[thm]{Remark}
\newtheorem{ex}[thm]{Example}

\usetikzlibrary{%
  matrix,%
  calc,%
  arrows%
}

\graphicspath{ {main/} }

\newcommand{\C}{\mathbb{C}}

\newcommand{\Z}{\mathbb{Z}}
\newcommand{\Q}{\mathbb{Q}}
\newcommand{\N}{\mathbb{N}}
\newcommand{\F}{\mathbb{F}}

\newcommand{\openp}{\pmb{(}}
\newcommand{\closep}{\pmb{)}}
\newcommand{\tensor}{\otimes}

\DeclareMathOperator{\Tr}{Tr}
\DeclareMathOperator{\Id}{id}

\DeclareMathOperator{\im}{im}

\DeclareMathOperator{\Ext}{Ext}

\DeclareMathOperator{\Hom}{Hom}
\DeclareMathOperator{\Gl}{GL}
\DeclareMathOperator{\SL}{SL}
\DeclareMathOperator{\Aut}{Aut}
\usepackage{esint}
\usepackage[backref=page, colorlinks]{hyperref}

\title[A Constructive Proof that Many Groups are Not MS]{A Constructive Proof that Many Groups with Non-Torsion 2-Cohomology are Not Matricially Stable}
\author{Forrest Glebe}
\date{July 2022}

\begin{document}
\maketitle
\begin{abstract}
A discrete group is matricially stable if every function from the group to a complex unitary group that is ``almost multiplicative'' in the point-operator norm topology is ``close'' to a genuine unitary representation. It follows from a recent result due to Dadarlat that all amenable, groups with non-torsion integral 2-cohomology are not matricially stable, but the proof does not lead to explicit examples of asymptotic representations that are not perturbable to genuine representations. The purpose of this paper is to give an explicit formula, in terms of cohomological data, for asymptotic representations that are not perturbable to genuine representations for a class of groups that contains all finitely generated groups with a non-torsion 2-cohomology class that corresponds to a central extension where the middle group is residually finite. This class includes polycyclic groups with non-torsion 2-cohomology. 
\end{abstract}

\tableofcontents

\section{Introduction}
An {\em asymptotic representation} of a discrete group $\Gamma$ is a sequence of functions $\rho_n:\Gamma\rightarrow U(k_n)$ so that for all $g,h\in\Gamma$ we have $||\rho_n(gh)-\rho_n(g)\rho_n(h)||\rightarrow0$ as $n$ goes to infinity, where $||\cdot||$ is operator norm. We say an asymptotic representation is {\em perturbable to a genuine representation} if there is a sequence of representations $\Tilde{\rho}_n:\Gamma\rightarrow U(k_n)$ so that for all $g\in\Gamma$ we have $||\rho_n(g)-\Tilde{\rho}_n(g)||\rightarrow0$ as $n$ goes to infinity. Recall that a countable discrete group, $\Gamma$, is {\em matricially stable} if every asymptotic representation of $\Gamma$ is perturbable to a genuine representation of~$\Gamma$~\cite{stab}.

In \cite{voiculescuM} Voiculescu shows that $\Z^2$ is not matricially stable by constructing an explicit sequence of pairs of unitaries that commute asymptotically in operator norm, but remain far from pairs of unitaries that commute. In~\cite{KvoiculescuM} Kazhdan independently uses the same sequence of pairs of unitaries to show that a particular surface group is not Ulam stable, where Ulam stability is defined similarly to matricial stability, but the poitwise convergence is replaced by uniform convergence. Kazhdan also connects his argument to the 2-cohomology of the group. In \cite{stab} Eilers, Shulman, and S\o rensen, give explicit asymptotic representations that are not perturbable to genuine representations for non-cyclic torsion-free finitely generated 2-step nilpotent, groups and several other groups.

In \cite{obs} Dadarlat shows that a large class of countable discrete groups with nonvanishing even rational cohomology are not matricially stable, including amenable, and hence polycyclic, groups with nonvanishing rational even cohomology. In \cite{quasireps} he connects this obstruction on the level of 2-cohomology to the ``winding number argument'' used by Kazhdan. However, the proof in \cite{obs} uses Voiculescu's theorem so it  cannot lead to an explicit construction of an asymptotic representation that is not perturbable to a genuine representation. 
In this paper we will give an alternate proof that a group with a 2-homology class that pairs nontrivially with a 2-cohomology class,~$x$ satisfying the an additional condition is not matricially stable, and give a formula, in terms of cohomological data. The following result is what we aim to prove:
\begin{thm}\label{coc2}
Suppose that $\Gamma$ is a countable discrete group and $x\in H^2(\Gamma;\Z)$ is a cohomology class represented as a central extension: 
$$\begin{tikzcd}
e\arrow{r} & \Z
\arrow{r}{\iota} &
\Tilde{\Gamma} \arrow{r}{}
& \Gamma\arrow{r}{}& e.
\end{tikzcd}$$
If $x$ is not in the kernel of the map $h:H^2(\Gamma;\Z)\rightarrow\Hom(H_2(\Gamma;\Z),\Z)$ induced by the Kronecker pairing and there is a sequence of finite index subgroups $\Gamma_n\le\tilde\Gamma$ so that $\iota(\Z)\cap\bigcap\Gamma_n=\{e\}$ then $\Gamma$ is not matricially stable. The sequence of functions $\rho_n$ we will define in Proposition~\ref{rhon} is an asymptotic representation of $\Gamma$ that cannot be perturbed to a genuine representation. In fact the asymptotic representation may not be perturbed to any representation let alone a unitary one.
\end{thm}

In particular if $\tilde\Gamma$ is residually finite  and $x$ is not in the kernel of the map $h:H^2(\Gamma;\Z)\rightarrow\Hom(H_2(\Gamma;\Z),\Z)$ then we can create an explicit formula for an asymptotic representation that cannot be perturbed to a genuine representation in terms of finite quotients of $\Gamma$ and cocycle representatives of $x$. If $\Gamma$ is finitely generated the condition that~$x$ is not in the kernel of $h$ is equivalent to the condition that~$x$ non-torsion. In particular it follows that any virtually polycyclic group with non-torsion 2-cohomology is not matricially stable and an explicit formula for the relevant asymptotic representation can be found in terms of cohomological data. 

Our construction is similar to another construction of projective representations of subgroups of $\Z^2\rtimes\SL_2(\Z)$ that come from factoring cocycles through finite quotients; see the proof of Corollary~B in~\cite{llp}.

Three virtues of our proof compared to Dadarlat's broader result are as follows. First, our proof leads to a formula for asymptotic representations that cannot be perturbed to genuine representations (Proposition~\ref{rhon}). We use this formula to construct new examples of asymptotic representations that cannot be perturbed to genuine representations in Section~5. Second, our proof is relatively elementary and uses only basic group cohomology, instead of employing techniques used in the Novikov conjecture. Third, because we do not use these techniques, we do not require the existence of a $\gamma$-element.

This paper is organized as follows. In Section~2 we will review relevant background information. In Section~3 we prove the main result (Theorem~\ref{coc2}), and find a formula for an asymptotic representation that cannot be perturbed to a genuine representation for a group satisfies the assumptions of the main theorems (Proposition~\ref{rhon}). We show that the main results apply to virtually polycyclic groups with non-torsion 2-cohomology (Corollary~\ref{polycresult}), and hence non-cyclic finitely generated torsion-free nilpotent groups. In Section~4 we give an alternate proof non-cyclic torsion-free finitely generated nilpotent groups satisfy the cohomological conditions required for the main result, which is useful for computing examples.In Section~5 we show how to recognize the cohomological conditions from generators and relations of a finitely generated group (Theorem~\ref{grmain}). In Section~6 we give an abstract motivation for our formula, showing that it can be made from an induced representation of a finite quotient of a central extension of the original group (Theorem~\ref{cext2}). In Section~7 we illustrate our formula for the following groups: $\Z^2$ to show our methods can recover Voiculescu's matrices, a 3-step nilpotent group, and the polycyclic group $\Z^2\rtimes\Z$ where the action of $\Z$ on $\Z^2$ is induced by ``Arnold's cat map.''

\section{Background Information}

\subsection{Group Homology and Cohomology}

There are many ways to characterize group homology and cohomology, but to us the most useful will be to describe them as the homology and cohomology of an explicit chain complex described below. We will only use homology with coefficients in $\Z$ and cohomology with trivial action in this paper. For more about this construction see \cite[Chapter II.3]{coho}.

\begin{defn}
Let $\Gamma$ be a discrete group. We define $C_n(\Gamma)$ to be the free abelian group generated by elements of $\Gamma^n$. We may write an element of $\Gamma^n$ as $[a_1|a_2|\cdots|a_n]$ with $a_i\in\Gamma$. We thus write a typical element of $C_n(\Gamma)$ as
$$c=\sum_{i=1}^Nx_i[a_{i1}|a_{i2}|\cdots|a_{in}]$$
with $x_i\in\Z$ and $a_{ij}\in\Gamma$. We define the boundary map $\partial_n$ from $C_n(\Gamma)\rightarrow C_{n-1}(\Gamma)$ by
$$\partial_n[a_1|\cdots|a_n]=$$
\vspace{-7mm}
$$[a_2|\cdots|a_{n}]+\sum_{i=1}^{n-1}(-1)^i[a_1|\cdots|a_{i-1}|a_ia_{i+1}|a_{i+2}|\cdots|a_n]+(-1)^n[a_1|\cdots|a_{n-1}].$$
Often we will just write $\partial$ where the domain is clear from context. The group {\em homology} of $\Gamma$ is the homology group the chain complex $(C_\bullet(\Gamma),\partial_\bullet)$, that is to say $H_n(\Gamma):=\ker(\partial_n)/\im(\partial_{n+1})$. \end{defn}
\begin{defn}
If $A$ is any abelian group then the {\em cohomology of $\Gamma$ with coefficients in $A$} is the cohomology of $(C_\bullet,\partial_\bullet)$ with coefficients\footnote{In general $A$ may be taken to be a left $\Z[\Gamma]$  module, but we will only consider the case where the action of $\Gamma$ on $A$ is trivial here, so we may consider $A$ to only have the structure of an abelian group.} in~$A$. 

To be more explicit, we use the notation
$$C^n(\Gamma;A):=\Hom(C_n(\Gamma),A)$$
and note that this is isomorphic to the group of functions from $\Gamma^n$ to $A$. Then we define $\partial^n:C^n(\Gamma;A)\rightarrow C^{n+1}(\Gamma;A)$ to be the adjoint\footnote{Actually \cite{coho} defines the coboundary map to be $(-1)^{n+1}$ times the adjoint, but this does not change the image or kernel boundary of the maps so it leads to an equivalent definition of the cohomology groups.} of $\partial_{n+1}$:
$$(\partial^n\sigma)(a_1,\ldots,a_{n+1})=$$
\vspace{-7mm}
$$\sigma(a_2,\ldots, a_{n+1})+\sum_{i=1}^n(-1)^i\sigma(a_1,\ldots, a_{i-1},a_ia_{i+1},a_{i+2},\ldots, a_{n+1})+(-1)^{n+1}\sigma(a_1,\ldots,a_n).$$
Then we define $H^n(\Gamma;A):=\ker(\partial^n)/\im(\partial^{n-1})$. As with homology we will suppress the $n$ in $\partial^n$ if the dimension is obvious from context.
\end{defn}

Suppose that $f$ is a group homomorphism from $\Gamma_1$ to $\Gamma_2$. This induces a map $f_\#$ from $C_n(\Gamma_1)$ to $C_n(\Gamma_2)$ and a map $f_*:H_n(\Gamma_1)\rightarrow H_n(\Gamma_2)$. Similarly the adjoint of $f_\#$ from $C^n(\Gamma_2)\rightarrow C^n(\Gamma_1)$ called $f^\#$ descends to a well-defined map from $f^*:H^2(\Gamma_2;A)\rightarrow H^2(\Gamma_1;A)$. Similarly if $g:A_1\rightarrow A_2$ is a homomorphism of abelian groups there is a map $g_\#$ from $C^n(\Gamma;A_1)\rightarrow C^n(\Gamma;A_2)$. This map descends to a well-defined map $g_*:H^n(\Gamma;A_1)\rightarrow H_n(\Gamma;A_2)$. All maps defined in this paragraph are functorial.

Because $C^n(\Gamma;A)$ is isomorphic to $\Hom(C_n(\Gamma),A)$ there is a natural bilinear map, called the Kronecker pairing, from $C^n(\Gamma;A)\times C_n(\Gamma)\rightarrow A$ defined by
$$\left\langle\sigma,\sum_{i=1}^Nx_i[a_{i1}|\cdots|a_{in}]\right\rangle=\sum_{i=1}^Nx_i\sigma(a_{i1},\ldots,a_{in}).$$
This descends to a well-defined bilinear map from $H^n(\Gamma;A)\times H_n(\Gamma)\rightarrow A$. We will use the notation $\langle\cdot,\cdot\rangle$ for both maps.
\subsection{2 (Co)homology}

\begin{thm}[{\cite[Theorem II.5.3]{coho}}] \label{hopf}
Suppose that $\Gamma\cong F/R$ where $F$ is a free group. Then
$$H_2(\Gamma)\cong\frac{R\cap[F,F]}{[F,R]}.$$
\end{thm}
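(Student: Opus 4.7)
The plan is to derive Hopf's formula from the 5-term exact sequence associated to the short exact sequence $1 \to R \to F \to \Gamma \to 1$. The general form of this sequence (valid for any short exact sequence $1 \to N \to G \to Q \to 1$) reads
$$H_2(G) \to H_2(Q) \to N/[G,N] \to G^{ab} \to Q^{ab} \to 0,$$
where the map $N/[G,N] \to G^{ab}$ is induced by the inclusion $N \hookrightarrow G$. The cleanest way to produce this is via the Lyndon--Hochschild--Serre spectral sequence for the extension; the key input is the identification of $H_0(Q; H_1(N))$ with $N/[G,N]$, since $H_1(N)=N^{ab}$ and the coinvariants under the conjugation action of $Q$ impose precisely the relations of the form $gng^{-1}n^{-1}\in[G,N]$ for $g\in G$, $n\in N$.

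Next I would specialize to $G=F$ free, $N=R$, $Q=\Gamma=F/R$. Freeness of $F$ yields a two-term free resolution of $\Z$ over $\Z[F]$, so $H_n(F)=0$ for all $n\ge 2$; in particular $H_2(F)=0$. The 5-term sequence then collapses to the injection $H_2(\Gamma) \hookrightarrow R/[F,R]$ whose image is the kernel of $R/[F,R] \to F/[F,F]$ induced by inclusion. Identifying this kernel is elementary: a coset $r[F,R]$ lies in the kernel iff $r\in[F,F]$, and therefore the image of $H_2(\Gamma)$ is exactly $(R\cap[F,F])/[F,R]$, as claimed.

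The main obstacle is the first step, namely rigorously producing the 5-term exact sequence with the correct identification of the middle term. If one wishes to avoid invoking the full LHS spectral sequence as a black box, an alternative is to work at the level of the bar complexes: one filters the bar complex of $F$ by the powers of the augmentation ideal of $\Z[R]\subset\Z[F]$, computes $E^2$ in low total degree, and reads off exactness directly. Either way, once the sequence is in place, the remaining argument is just the kernel computation above, which is purely formal.
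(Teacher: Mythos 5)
The paper does not prove this result; it cites it directly as Theorem II.5.3 in Brown's \emph{Cohomology of Groups}, where the proof is indeed the one you outline: derive the five-term exact sequence in homology for $1\to R\to F\to\Gamma\to 1$, use $H_2(F)=0$ for $F$ free (via the two-term free resolution coming from the augmentation ideal), and identify the kernel of $R/[F,R]\to F^{\mathrm{ab}}$ as $(R\cap[F,F])/[F,R]$. Your argument is correct and follows the standard route of the cited reference, so there is nothing substantive to compare.
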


An explicit isomorphism from the the right side of the equation to $H_2(\Gamma)$ as described above can be defined as follows. Suppose that
$$r_1=\prod_{i=1}^N[{a}_i,{b}_i]\in R\cap[F,F]$$
then by \cite[chapter II.5 Exercise 4]{coho}, 
\[r_1\mapsto \sum_{i=1}^N([I_{i-1}|\dot a_i]+[I_{i-1}\dot a_i|\dot b_i]-[I_{i-1}\dot a_i\dot b_i\dot a_i^{-1}|\dot a_i]-[I_i|\dot b_i])+\im(\partial_3)\]
where $\dot a_i$ and $\dot b_i$ are the images in $\Gamma$ of ${a}_i$ and ${b}_i$ and $I_i=[\dot a_1,\dot b_1]\cdots[\dot a_i,\dot b_i]$.

The 2-cohomology has an alternative characterization.

\begin{defn}
If $\Gamma$ is a discrete group and $A$ is an abelian group, then a {\em central extension} of $\Gamma$ by $A$ is a short exact sequence
$$\begin{tikzcd}
e\arrow{r} & A
\arrow{r}{} &
\Tilde{\Gamma} \arrow{r}{}
& \Gamma\arrow{r}{}& e
\end{tikzcd}$$
where the image of $A$ in $\tilde{\Gamma}$ is central in $\tilde{\Gamma}$. We say two central extensions are {\em equivalent} if we can make a commutative diagram as follows:
$$\begin{tikzcd}
e\arrow{r} & A
\arrow{r}{}\arrow{d}{\Id_A} &
\Tilde{\Gamma}_1\arrow{d}{\cong} \arrow{r}{}
& \Gamma\arrow{r}{}\arrow{d}{\Id_\Gamma}& e
\\
e\arrow{r} & A
\arrow{r}{} &
\Tilde{\Gamma}_2 \arrow{r}{}
& \Gamma\arrow{r}{}& e.
\end{tikzcd}$$
\end{defn}

\begin{thm}[{\cite[Theorem IV.3.12]{coho}}] As a set $H^2(\Gamma;A)$ is in bijection with the equivalence classes of central extensions of $\Gamma$ by $A$.
\end{thm}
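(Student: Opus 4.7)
The plan is to construct mutually inverse maps between $H^2(\Gamma;A)$ and the set of equivalence classes of central extensions, then check well-definedness on each side. Given a central extension $1\to A\to\tilde\Gamma\to\Gamma\to 1$, I would first pick a set-theoretic section $s:\Gamma\to\tilde\Gamma$ with $s(e)=e$, which exists because $\tilde\Gamma\to\Gamma$ is surjective. Since $s(g)s(h)$ and $s(gh)$ both project to $gh\in\Gamma$, there is a unique $\sigma(g,h)\in A$ (identifying $A$ with its central image) such that $s(g)s(h)=\sigma(g,h)\cdot s(gh)$. Writing the associative product $s(g_1)s(g_2)s(g_3)$ in two ways and using centrality of $A$ yields exactly the condition $\partial^2\sigma=0$ from Section~2.2, so we obtain a class $[\sigma]\in H^2(\Gamma;A)$.

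Next I would verify that this class is independent of the chosen section and of the representative of the equivalence class of extensions. Any other section has the form $s'(g)=\alpha(g)s(g)$ for some $\alpha:\Gamma\to A$ with $\alpha(e)=0$, and a direct expansion gives $\sigma'=\sigma+\partial^1\alpha$. If $\phi:\tilde\Gamma_1\to\tilde\Gamma_2$ is an equivalence of extensions, then $\phi\circ s_1$ is a section of the second extension producing the same cocycle as $s_1$ does for the first, because $\phi$ is the identity on $A$. Thus the assignment descends to a well-defined map from equivalence classes of central extensions into $H^2(\Gamma;A)$.

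For the inverse, given any cocycle $\sigma$, after replacing it with a cohomologous normalized representative (one with $\sigma(e,g)=\sigma(g,e)=0$), I would define $\tilde\Gamma_\sigma$ to be the set $A\times\Gamma$ with multiplication
\[
(a,g)\cdot(b,h)=\bigl(a+b+\sigma(g,h),\,gh\bigr).
\]
The cocycle identity $\partial^2\sigma=0$ is precisely the condition needed for associativity, normalization gives identity $(0,e)$, and inverses are easily built. The inclusion $a\mapsto(a,e)$ and projection $(a,g)\mapsto g$ then form a central extension, and the section $g\mapsto(0,g)$ recovers $\sigma$. One checks that a coboundary change $\sigma'=\sigma+\partial^1\alpha$ produces an extension equivalent to the original via the map $(a,g)\mapsto(a+\alpha(g),g)$. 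Together with the previous paragraph, this shows the two constructions are mutually inverse on equivalence classes.

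The main obstacle is purely bookkeeping: matching the signs in the formula for $\partial^2$ given in Section~2.2 to those produced by the associativity computation for $s(g_1)s(g_2)s(g_3)$, and being careful with normalization so that the constructed $\tilde\Gamma_\sigma$ really has a two-sided identity and inverses. No single step is deep, but a consistent convention (writing $A$ additively or multiplicatively, and whether $\sigma(g,h)$ sits on the left or right of $s(gh)$) must be chosen at the outset and followed throughout; getting any of these wrong introduces spurious sign errors that obscure the match with the cochain complex.
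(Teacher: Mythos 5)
Your proof is correct and follows the standard construction that both the paper (in the paragraph immediately after the theorem statement, where it takes $\sigma(g,h)=\theta(g)\theta(h)\theta(gh)^{-1}$ from a section $\theta$) and the cited reference use; the paper itself only cites Brown and does not reproduce the argument. One small remark: in your proof a section with $s(e)=e$ automatically yields a normalized cocycle (since $\sigma(e,g)=s(e)s(g)s(g)^{-1}=0$ and likewise $\sigma(g,e)=0$), so the normalization step is only needed in the direction going from an arbitrary cocycle to an extension, and the sign in your coboundary-equivalence map $(a,g)\mapsto(a+\alpha(g),g)$ actually sends $\tilde\Gamma_\sigma$ to $\tilde\Gamma_{\sigma-\partial^1\alpha}$, but this is harmless since one can replace $\alpha$ by $-\alpha$.
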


Given an explicit central extension we may find a cocycle representative of the corresponding element of $H^2(\Gamma;A)$ as follows. Pick $\theta$ to be a set theoretic section from $\Gamma$ to $\Tilde{\Gamma}$. Then viewing $A$ as a subset of $\Tilde{\Gamma}$ define
$$\sigma(g,h)=\theta(g)\theta(h)\theta(gh)^{-1}\in A.$$
By \cite[Equation IV.3.3]{coho} this is a cocycle representative of the cohomology class corresponding to this central extension.

\subsection{Polycyclic and Nilpotent Groups}
\begin{defn}
A group $\Gamma$ is called {\em polycyclic} if there is a sequence of subgroups
$$\Gamma=\Gamma_1\ge \Gamma_2\ge\cdots\ge\Gamma_m\ge\Gamma_{m+1}=\{e\}$$
so that $\Gamma_{i}\triangleright\Gamma_{i+1}$ and $\Gamma_i/\Gamma_{i+1}$ is cyclic. A sequence of subgroups obeying this condition is called a {\em polycyclic sequence of subgroups}. We may pick this sequence so that each quotient is nontrivial. We may pick~$a_i$ to be a representative of a generator of $\Gamma_i/\Gamma_{i+1}$. We call these generators a {\em polycyclic sequence} for $\Gamma$, and they generate $\Gamma$.
\end{defn}

\begin{defn}
A group is called {\em virtually polycyclic} if it has a finite index polycyclic subgroup.
\end{defn}

In this case we may assume that there is a {\em normal} finite index polycyclic subgroup. This may be constructed by intersecting over each conjugates of the subgroup and using the fact that a subgroup of a polycyclic group is polycyclic \cite[Proposition 9.3.7]{sims}. By \cite[Theorem 3]{Hirsch} polycyclic groups are residually finite. From this it follows that virtually polycyclic groups are residually finite as well. 

\begin{prop}\label{virtualPext}
Suppose that $\Gamma$ is virtually polycyclic, and
$$\begin{tikzcd}
e\arrow{r} & \Z
\arrow{r}{\iota} &
\Tilde{\Gamma} \arrow{r}{\varphi}
& \Gamma\arrow{r}{}& e
\end{tikzcd}$$
is an extension of $\Gamma$. Then $\tilde{\Gamma}$ is virtually polycyclic as well.
\end{prop}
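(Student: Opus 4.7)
The plan is to reduce to the case where $\Gamma$ is polycyclic and then build a polycyclic series for $\tilde\Gamma$ by pulling back a polycyclic series for $\Gamma$ along $\varphi$ and appending the copy of $\Z$ at the bottom.

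First I would choose a finite index polycyclic subgroup $\Gamma_0\le\Gamma$ and set $\tilde\Gamma_0:=\varphi^{-1}(\Gamma_0)$. Since $\varphi$ is surjective the index $[\tilde\Gamma:\tilde\Gamma_0]$ equals $[\Gamma:\Gamma_0]$, which is finite, so it suffices to prove that $\tilde\Gamma_0$ is polycyclic. By construction $\tilde\Gamma_0$ fits into an extension
$$\begin{tikzcd}
1\arrow{r} & \Z\arrow{r}{\iota} & \tilde\Gamma_0 \arrow{r}{\varphi} & \Gamma_0\arrow{r} & 1,
\end{tikzcd}$$
so we have reduced to the genuinely polycyclic case.

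Next, given a polycyclic series $\Gamma_0=G_1\ge G_2\ge\cdots\ge G_{m+1}=\{1\}$ with $G_{i+1}\triangleleft G_i$ and $G_i/G_{i+1}$ cyclic, I would define $H_i:=\varphi^{-1}(G_i)$ for $1\le i\le m+1$ and extend by $H_{m+2}:=\{1\}$. This yields a descending chain
$$\tilde\Gamma_0=H_1\ge H_2\ge\cdots\ge H_{m+1}=\iota(\Z)\ge H_{m+2}=\{1\}.$$
Since $\varphi$ is a homomorphism, $G_{i+1}\triangleleft G_i$ immediately gives $H_{i+1}\triangleleft H_i$ for $i\le m$, and $H_{m+2}\triangleleft H_{m+1}$ is trivial. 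The standard isomorphism theorem applied to the surjection $\varphi|_{H_i}\colon H_i\to G_i$ (whose kernel is $\iota(\Z)\subseteq H_{i+1}$) shows that $H_i/H_{i+1}\cong G_i/G_{i+1}$ is cyclic for $i\le m$, while $H_{m+1}/H_{m+2}\cong\Z$ is also cyclic. Thus this is a polycyclic series for $\tilde\Gamma_0$.

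There is no serious obstacle here; the argument is entirely formal once one observes that pulling back a subnormal chain along a surjective homomorphism preserves subnormality and quotients. The only minor care required is that the image $\iota(\Z)$, which appears as the bottom nontrivial term of the pulled-back chain, is itself cyclic so that the chain can be completed to $\{1\}$ in one more step; this is automatic because the kernel in the hypothesis is $\Z$ itself. I would not need centrality of the extension anywhere in the argument.
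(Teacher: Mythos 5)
Your proof is correct and follows essentially the same strategy as the paper: pull back a polycyclic series for a finite-index polycyclic subgroup of $\Gamma$ along $\varphi$, observe that the quotients of consecutive preimages are isomorphic to the original quotients, and append $\iota(\Z)\simeq\Z$ as the bottom cyclic layer. The only cosmetic difference is that the paper chooses the finite-index polycyclic subgroup to be normal in $\Gamma$, which is unnecessary; your observation that centrality is not needed is also correct and matches the paper.
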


\begin{proof}
Let $P\subseteq\Gamma$ be a finite index normal polycyclic subgroup of $\Gamma$. Since $\iota(\Z)\subseteq\varphi^{-1}(P)$ we have that
$$\Gamma/P\cong\frac{\tilde{\Gamma}/\iota(\Z))}{\varphi^{-1}(P)/\iota(\Z))}\cong\tilde{\Gamma}/\varphi^{-1}(P).$$
Thus we may show that $\varphi^{-1}(P)$ is polycyclic. If $\{P_i\}_{i=1}^{m+1}$ is a polycyclic sequence of subgroups for $P$ then we have for $i\le m$
$$\varphi^{-1}(P_i)/\varphi^{-1}(P_{i+1})\cong\frac{\varphi^{-1}(P_i)/\iota(\Z)}{\varphi^{-1}(P_{i+1})/\iota(\Z)}\cong\Gamma_i/\Gamma_{i+1}$$
and $P_{m+1}/\{1\}\cong\Z$. Thus these make a polycyclic sequence of subgroups.
\end{proof}

By \cite[Proposition 9.3.4]{sims} all finitely generated nilpotent groups are polycyclic. Let $\Gamma$ be a torsion-free finitely generated nilpotent group.
\begin{defn}
A {\em Mal'cev basis} for a Torsion-Free finitely generated nilpotent group is an $m$-tuple of elements, $(a_1,\ldots a_m)\in\Gamma^m$ so that obeys the following conditions
\begin{itemize}
    \item For all $g\in\Gamma$, $g$ can be written uniquely as $g=a_1^{x_1}\cdots a_m^{x_m}$ for some $(x_1,\ldots, x_m)\in\Z^m$.  We call this presentation the {\em canonical form} of $g$.
    \item The subgroups $\Gamma_i=\langle a_i,\ldots, a_m\rangle$ form a central series for $\Gamma$.
\end{itemize}
\end{defn}

Every finitely generated torsion-free nilpotent group has a Mal'cev basis by \cite[Lemma 8.23]{Comp}. It also follows that $\Gamma_i$ make a polycyclic sequence of subgroups.

\subsection{Rational 2-Cohomology of a Torsion-Free Finitely Generated Nilpotent Group}

We will need the following result. 
\begin{thm}[{\cite{pickel},\cite{lie_groups_and_algebras}}]\label{Qcoho}
Let $\Gamma$ be a torsion-free finitely generated nilpotent group that is neither $\Z$ nor the trivial group. Then $H^2(\Gamma;\Q)\not\cong\{0\}$.
\end{thm}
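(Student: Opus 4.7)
The plan is to reduce the computation of $H^2(\Gamma;\Q)$ to the Chevalley--Eilenberg cohomology of the Mal'cev Lie algebra of $\Gamma$, and then to exhibit a nonzero class there. This is the classical route taken in~\cite{lie_groups_and_algebras}.

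First, I would invoke Mal'cev's theorem to embed $\Gamma$ as a cocompact lattice in a uniquely determined simply connected nilpotent Lie group $G$, whose real dimension equals the Hirsch length $h(\Gamma)$. The assumption that $\Gamma$ is neither trivial nor $\Z$ forces $h(\Gamma)\ge 2$: a torsion-free finitely generated nilpotent group of Hirsch length $1$ is forced to be $\Z$, because its Mal'cev completion is then $\R$. The compact nilmanifold $N:=\Gamma\backslash G$ is a closed orientable aspherical manifold of dimension $n:=h(\Gamma)\ge 2$ (orientability follows from the fact that nilpotent Lie algebras are unimodular), hence a model for $K(\Gamma,1)$, so $H^*(\Gamma;\R)\cong H^*(N;\R)$. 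Nomizu's theorem further identifies this with the Chevalley--Eilenberg cohomology $H^*(\mathfrak{g};\R)$ of the Lie algebra $\mathfrak{g}$ of $G$. Since $H_2(\Gamma;\Z)$ is finitely generated, universal coefficients yield $H^2(\Gamma;\Q)\otimes_{\Q}\R\cong H^2(\Gamma;\R)$, so it suffices to produce a nonzero class in $H^2(\mathfrak{g};\R)$.

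Next, I would split on whether $\mathfrak{g}$ is abelian. If it is, then the Chevalley--Eilenberg differential vanishes, so $H^2(\mathfrak{g};\R)=\Lambda^2\mathfrak{g}^*$ has dimension $\binom{n}{2}\ge 1$. If $\mathfrak{g}$ is non-abelian, I would use Poincar\'e duality on $N$ to obtain $H^k(\mathfrak{g};\R)\cong H^{n-k}(\mathfrak{g};\R)$; for $n=2$ this already gives $H^2\cong H^0=\R$, and for $n\ge 3$ I would quote Dixmier's theorem, which asserts that $\dim H^k(\mathfrak{g};\R)\ge 2$ for $1\le k\le n-1$ whenever $\mathfrak{g}$ is a non-abelian nilpotent Lie algebra of dimension $n$, applied with $k=2$.

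The only non-routine ingredient above is Dixmier's theorem; everything upstream of it is bookkeeping. If one wanted to avoid Dixmier and give a purely group-theoretic argument, the natural fallback is induction on $h(\Gamma)$ via the Lyndon--Hochschild--Serre spectral sequence for a central extension $1\to\langle z\rangle\to\Gamma\to\Gamma/\langle z\rangle\to 1$, with $z$ generating a rank-one primitive central subgroup chosen so that the quotient remains torsion-free finitely generated nilpotent. Because $H^q(\Z;\Q)=0$ for $q\ge 2$, the spectral sequence has only two nonzero rows, and its only nontrivial differential is cup product with the Euler class $\theta\in H^2(\Gamma/\langle z\rangle;\Q)$. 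Closing such an induction would require tracking both $\coker(\cup\theta)$ in bidegree $(2,0)$ and $\ker(\cup\theta)$ in bidegree $(1,1)$, and strengthening the inductive hypothesis enough to rule out simultaneous degeneracy. That bookkeeping is the main place such an argument can go sideways, which is why the Lie-algebraic route is preferable.
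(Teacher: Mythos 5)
Your proof is correct and takes essentially the same route as the paper: reduce $H^2(\Gamma;\Q)$ to the Chevalley--Eilenberg cohomology of the Mal'cev Lie algebra and then invoke nonvanishing of second cohomology for nilpotent Lie algebras. The paper cites Pickel's rational comparison theorem directly where you pass through Nomizu's real version plus a universal-coefficients step, and the nonvanishing result the paper attributes to Ado is what you (more standardly) identify as Dixmier's theorem together with the elementary abelian and low-dimensional cases.
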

\begin{proof}
By a result of Pickel \cite{pickel} we have that $H^\bullet(\Gamma;\Q)$ can be calculated in terms of the of the cohomology of an associated rational Lie algebra. By a result of Ado explained on \cite[pg 86]{lie_groups_and_algebras} the 2-cohomology of the algebra is nonzero.  
\end{proof}

\begin{cor}\label{nilpairing}
If $\Gamma$ is a torsion-free finitely generated nilpotent group that is neither $\Z$ nor trivial there is a pair $([\sigma],c)\in H^2(\Gamma;\Z)\times H_2(\Gamma)$ so that $\langle[\sigma],c\rangle\ne0$.
\end{cor}
\begin{proof}
The rational 2-cohomology of $\Gamma$ is nontrivial by Theorem~\ref{Qcoho}. By the universal coefficient theorem \cite[Theorem 53.1]{eat} we have a sequence
$$\begin{tikzcd}
0\arrow{r}{}&\Ext(H_1(\Gamma),\Q)\arrow{r}{}& H^2(\Gamma;\Q)\arrow{r}{}&\Hom(H_2(\Gamma),\Q)\arrow{r}{}&0
\end{tikzcd}.$$
Since $\Hom(\bullet,\Q)$ is exact $\Ext(H_1(\Gamma),\Q))\cong\{0\}$. From this it follows that $H_2(\Gamma)$ is non-torsion. Next we need to show that $H_2(\Gamma)$ is finitely generated. First we will show that $H^n(\Gamma;\Z)$ is finitely generated for all $n$ and for all torsion-free finitely generated nilpotent groups by induction on the number of elements in the Mal'cev basis. If this number is zero, this is obvious. For the inductive step let $x_1,\ldots,x_m$ be a Mal'cev basis. Then by the inductive hypothesis $H^n(\Gamma/\langle x_m\rangle;\Z)$ is finitely generated for all $n$. By \cite[Theorem~5.3]{gysin} there is an exact sequence
$$\begin{tikzcd}
\cdots H^{n+2}(\Gamma/\langle x_m\rangle;\Z)\arrow{r}{}&H^{n+2}(\Gamma;\Z)\arrow{r}{}&H^{n+1}(\Gamma/\langle x_m\rangle)\cdots
\end{tikzcd}$$
which shows that $H^{n}(\Gamma;\Z)$ is finitely generated for $n\ge2$. For $n=0$ this is obvious and for $n=1$ this follows from the fact that $H^1(\Gamma;\Z)=\Hom(H_1(\Gamma),\Z)=\Hom(\Gamma/[\Gamma,\Gamma],\Z)$~\cite[page 36]{coho}. Next the fact that the homology is finitely generated as well, follows from~\cite[Proposition 3F.12]{hatcher}. Because $H_2(\Gamma)$ is non-torsion and finitely generated we know $\Hom(H_2(\Gamma),\Z)$ is nonzero. In particular there must be a cocycle $[\sigma]$ that is not in the kernel of the map from $H^2(\Gamma;\Z)$ to $\Hom(H_2(\Gamma),\Z)$.
\end{proof}
\subsection{Log of a Matrix}

Throughout this article if $m$ is a matrix we define $\log(m)$ to be the power series for log centered at 1. That is
$$\log(m):=\sum_{j=1}^\infty(-1)^{j-1}\frac{(m-\Id)^n}{n}$$
which converges and is continuous for $||m-\Id||<1$. We will consider this to be well-defined when $||m-\Id||<1$. By looking at the Jordan form for $m$ we can see that the eigenvalues of $\log(m)$ are the logs of eigenvalues of $m$. This justifies the formula $\exp(\Tr(\log(m))=\det(m)$.

\subsection{Voiculescu's Matrices and the Winding Number Argument}

The classical example due to Voiculescu in \cite{voiculescuM} for an asymptotic representation of $\Z^2$ that is not perturbable to a genuine representation comes in the form:
$$\rho_n(a,b)=u_n^av_n^b$$
where $u_n$ and $v_n$ are $n\times n$ matrices such that
$$u_n=\begin{bmatrix}
0&0&\cdots&0&0&1\\
1&0&\cdots&0&0&0\\
0&1&\cdots&0&0&0\\
\vdots&\vdots&\ddots&\vdots&\vdots&\vdots\\
0&0&\cdots&1&0&0\\
0&0&\cdots&0&1&0
\end{bmatrix}\mbox{ and }
v_n=\begin{bmatrix}
\exp\left(\frac{2\pi i}{n}\right)&0&0&\cdots&0\\
0&\exp\left(\frac{4\pi i}{n}\right)&0&\cdots&0\\
0&0&\exp\left(\frac{6\pi i}{n}\right)&\cdots&0\\
\vdots&\vdots&\vdots&\ddots&\vdots\\
0&0&0&\cdots&1
\end{bmatrix}.
$$
The argument that we summarize here was first applied to this problem by Exel and Loring in~\cite{ELVoiculescuM}, and had previously, idependently been used by Kazhdan in ~\cite{KvoiculescuM}. It can be computed that $u_nv_nu_n^{-1}v_n^{-1}=\exp\left(\frac{-2\pi i}{n}\right)\Id_{\C^n}$. It is not difficult to show that the fact that this gets arbitrarily close to $\Id_{\C^n}$ in operator norm implies asymptotic multiplicativity of the associated representation. A sketch of the argument that this asymptotic representation cannot be perturbed follows. 

The path $p(t)=\det\left(t\exp\left(\frac{-2\pi i}{n}\right)\Id_{\C^n}+(1-t)\Id_{\C^n}\right)$ is a path in $\C^\times$ with winding number $-1$. Suppose for contradiction that $u_n$ is close enough in operator norm to a $u_n'$ and $v_n$ is close enough to $v_n'$ so that $v_n'u_n'=u_n'v_n'$ we make a contradiction as follows. We define
\begin{align*}
\hat{u}_n(s)&=su_n+(1-s)u_n'\\
\hat{v}_n(s)&=sv_n+(1-s)v_n'
\end{align*}
then
$$h(t,s)=\det(t\hat{u}_n(s)\hat{v}_n(s)\hat{u}_n(s)^{-1}\hat{v}_n(s)^{-1}+(1-t)\Id_{\C^n}).$$
It can be shown that $h(t,s)\ne0$ for all $s,t\in[0,1]$. It follows that $h$ is a homotopy from the path $p$ to the trivial loop centered at 1. This is a contradiction since~$p$ has nonzero winding number. 

A more general statement of this type of invariant can be found in \cite[Theorem 3.9]{stab} or in~\cite{quasireps}. Essentially the relation $u_nv_nu_n^{-1}v_n^{-1}$ can be replaced with another product of commutators. Note that the winding number of $p$ could also be computed by calculating $\Tr\left(\log\left(u_nv_nu_n^{-1}v_n^{-1}\right)\right)/(2\pi i)$, where $\log$ is defined to be a power-series centered at 1. We will phrase our analogous argument in terms of computing trace of log instead of computing the winding number directly, but it is inspired by this more classical argument. Kazhdan develops this example independently to show that a certain surface group is not uniformly stable \cite{KvoiculescuM}. He develops them as a representation of a central extension of the group in question, thereby connecting asymptotic representations to 2-cohomology.

\section{Main Results}

In Subsection 3.1 we prove some analytic lemmas that we use later. In subsection ~3.2 we develop a pairing between between almost multiplicative functions from $\Gamma$ to $M_n$ and 2-chains (Definition~\ref{pairing}). We show that if an almost multiplicative function is close enough to a genuine unitary representation its pairing with a 2-cycle is zero (Theorem~\ref{obstruction}). In subsection~3.3 we introduce a ``finite type'' condition on cohomology classes (Definition~\ref{finiteT}), and give some alternate characterizations of the definition (Proposition~\ref{finiteQ}). In subsection~3.4 we develop a formula for an asymptotic representation (Proposition~\ref{rhon}), and show that if the right cohomological conditions hold it is well-defined and cannot be perturbed to a genuine representation (Theorem~\ref{coc2}). We show that polycyclic groups with non-torsion 2-cohomology meet this condition (Corollary~\ref{polycresult}).

\subsection{Analytic Lemmas}
We will use the following elementary results. 
\begin{lemma}\label{anal}
Let $A$ be a $C^*$-algebra. Then the following hold:
\begin{enumerate}
    \item If $a_i,b_i\in A$ for $i\in\{1,\ldots,N\}$ so that for all $i$, $||a_i-b_i||<\varepsilon$ and $||a_i||,||b_i||<M$ we have that
    $$\left|\left|\prod_{i=1}^Na_i-\prod_{i=1}^Nb_i\right|\right|<NM^{N-1}\varepsilon.$$
    \item If $a\in A$ and $u$ is a unitary in $A$ so that $||a-u||\le\frac12$ we have
    $$||a^{-1}-u^*||\le2||a-u||.$$
    In particular $||a^{-1}||\le2$.
\end{enumerate}
\end{lemma}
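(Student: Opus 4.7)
The plan is to handle the two parts separately, using only elementary $C^*$-algebra manipulations.

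For part (1), I would use the standard telescoping identity
\[
\prod_{i=1}^N a_i - \prod_{i=1}^N b_i = \sum_{k=1}^N \bigl(a_1 \cdots a_{k-1}\bigr)(a_k - b_k)\bigl(b_{k+1} \cdots b_N\bigr),
\]
which one verifies by induction on $N$ (the base case $N=1$ is trivial, and the inductive step reduces to inserting one additional factor on the left or right). Estimating each summand in operator norm by submultiplicativity gives a bound of $M^{k-1} \cdot \varepsilon \cdot M^{N-k} = M^{N-1}\varepsilon$, and the triangle inequality over the $N$ terms yields the stated bound $NM^{N-1}\varepsilon$. This part is purely computational and I do not expect any obstacle.

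For part (2), the idea is to reduce to a Neumann series for an invertible element close to the identity. Multiplying by $u^*$ on the left, I write $u^*a = 1 + e$ where $e := u^*(a-u)$. Since $u$ is unitary, $\|e\| \le \|a-u\| \le \tfrac12$, so $1+e$ is invertible with
\[
\|(1+e)^{-1}\| \le \frac{1}{1-\|e\|} \le 2
\]
via the geometric series $\sum_{k\ge 0} (-e)^k$. Hence $a$ is invertible with $a^{-1} = (1+e)^{-1}u^*$, and
\[
a^{-1} - u^* = \bigl((1+e)^{-1} - 1\bigr)u^* = -(1+e)^{-1}\, e\, u^*,
\]
so $\|a^{-1} - u^*\| \le 2 \|e\| \le 2\|a-u\|$. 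The bound $\|a^{-1}\| < 2$ then follows from the triangle inequality $\|a^{-1}\| \le \|a^{-1}-u^*\| + \|u^*\| \le 2\|a-u\| + 1$, which is strictly less than $2$ whenever $\|a-u\| < \tfrac12$ (and the boundary case is handled by noting the geometric series estimate is strict unless $e=0$, in which case $a=u$ and $\|a^{-1}\|=1$).

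I do not anticipate any real obstacle here; both parts are standard facts about perturbations of invertibles in $C^*$-algebras. The only minor care needed is in part (2) to justify strict inequality in $\|a^{-1}\| < 2$ at the boundary $\|a-u\| = \tfrac12$, which can be done by observing that the geometric series bound is never saturated in a $C^*$-algebra unless $e = 0$.
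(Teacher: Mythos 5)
Both parts follow essentially the same route as the paper's proof: the telescoping decomposition for (1), and a Neumann-series argument for $u^*a$ near the identity in (2). Your rearrangement $a^{-1}-u^* = -(1+e)^{-1}eu^*$ is a clean algebraic variant of the paper's estimate $\|b^{-1}-1\| \le \sum_{k\ge 1}\|1-b\|^k$ with $b=u^*a$, and the main bound $\|a^{-1}-u^*\|\le 2\|a-u\|$ is established correctly.

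However, your closing remark about the boundary case $\|a-u\|=\tfrac12$ is wrong, and it is worth flagging because it indicates a real misconception. You claim the geometric-series bound $\|(1+e)^{-1}\|\le(1-\|e\|)^{-1}$ is never saturated in a $C^*$-algebra unless $e=0$. This is false: take $e=-\tfrac12 p$ for a nonzero projection $p$; then $\|e\|=\tfrac12$ and $(1+e)^{-1}=1+p$ has norm exactly $2=(1-\|e\|)^{-1}$. Consequently, with $u=1$ and $a=1-\tfrac12 p$ one has $\|a-u\|=\tfrac12$ while $\|a^{-1}\|=2$, so the conclusion ``$\|a^{-1}\|<2$'' literally fails at the boundary. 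The honest resolution is that the hypothesis should read $\|a-u\|<\tfrac12$ (strict), which is in fact what the paper's proof uses in its final step and what is available in every application of the lemma; one should not try to rescue the non-strict version by appealing to a non-existent rigidity in the Neumann series estimate.
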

\begin{proof}
(1) We compute
\begin{align*}
\sum_{j=1}^N\left(\prod_{i=1}^{j-1} a_i\right)(a_j-b_j)\prod_{i=j+1}^Nb_i&=\sum_{j=1}^N\left(\prod_{i=1}^{j}a_i\right)\prod_{i=j+1}^Nb_i-\sum_{j=1}^N\left(\prod_{i=1}^{j-1}a_i\right)\prod_{i=j}^Nb_i\\
&=\sum_{j=1}^N\left(\prod_{i=1}^{j}a_i\right)\prod_{i=j+1}^Nb_i
-\sum_{j=0}^{N-1}\left(\prod_{i=1}^{j}a_i\right)\prod_{i=j+1}^Nb_i
\\
&=\prod_{i=1}^Na_i-\prod_{i=1}^Nb_i.
\end{align*}
Applying the triangle inequality and submultiplicativity to the first term gives us the desired inequality.
\\
\\
(2) Let $b=u^*a$. Note that $||b-1||=||a-u||<1$ so by \cite[Theorem 1.2.2]{murph} we have that
$$b^{-1}=\sum_{k=0}^\infty(1-b)^k.$$
Thus
\begin{align*}
||a^{-1}-u^*||&=||a^{-1}u-1||\\
&=||b^{-1}-1||\\
&\le\sum_{k=1}^\infty||1-b||^k\\
&=\frac{||1-b||}{1-||1-b||}\\
&=\frac{||a-u||}{1-||a-u||}\\
&\le2||a-u||.
\end{align*}
We use that $||a-u||\le\frac12$ in the last step.
\end{proof}
\begin{lemma}\label{anal2}
Let $m_1,m_2\in U(n)$ so that $||m_i-\Id_{\C^n}||<\frac12$. Then if $\log$ is defined as a power series centered at 1 we have that
$$\Tr(\log(m_1m_2))=\Tr(\log(m_1))+\Tr(\log(m_2)).$$
\end{lemma}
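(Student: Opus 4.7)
The plan is to reduce the identity to a congruence modulo $2\pi i\Z$ using determinants, then upgrade to equality with a connectedness argument along a path from $\Id$ to $m_1m_2$.

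First I would invoke the standard identity $\det(\exp(A)) = \exp(\Tr(A))$ valid for any complex square matrix, together with the fact that $\exp(\log(m)) = m$ whenever $\|m - \Id\| < 1$, to conclude $\det(m) = \exp(\Tr(\log(m)))$ on that domain. All three of $m_1, m_2, m_1m_2$ lie in this domain: for the product, writing $m_1m_2 - \Id = (m_1-\Id)m_2 + (m_2-\Id)$ and using that $m_2$ is unitary gives $\|m_1m_2 - \Id\| < 1$. Applying the determinant identity and multiplicativity of $\det$ then yields
$$\exp\bigl(\Tr(\log(m_1m_2))\bigr) = \exp\bigl(\Tr(\log(m_1)) + \Tr(\log(m_2))\bigr),$$
so the difference $D := \Tr(\log(m_1m_2)) - \Tr(\log(m_1)) - \Tr(\log(m_2))$ lies in $2\pi i\Z$.

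To show $D=0$, I would interpolate. Set $X_i := \log(m_i)$, which is skew-Hermitian since $m_i$ is unitary, and define $m_i(s) := \exp(sX_i)$ for $s \in [0,1]$. The spectral step is to check that each $m_i(s)$ remains in the $\frac12$-ball around $\Id$: the eigenvalues of $m_i$ have the form $e^{i\theta}$ with $|e^{i\theta}-1| = 2|\sin(\theta/2)| < \tfrac12$, so $|\sin(\theta/2)| < \tfrac14$, and since $|\sin(s\theta/2)| \le |\sin(\theta/2)|$ for $s \in [0,1]$, the eigenvalues $e^{is\theta}$ of $m_i(s)$ satisfy the same bound. The triangle-inequality trick above then gives $\|m_1(s)m_2(s) - \Id\| < 1$ for every $s$, so
$$F(s) := \Tr\bigl(\log(m_1(s)m_2(s))\bigr) - s\Tr(X_1) - s\Tr(X_2)$$
is defined and continuous on $[0,1]$. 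Repeating the determinant calculation with $m_i(s)$ in place of $m_i$ shows $F(s) \in 2\pi i\Z$, while $F(0) = 0$. A continuous function from $[0,1]$ into the discrete set $2\pi i\Z$ is constant, hence $F(1) = D = 0$, proving the claim.

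The main obstacle is ensuring the entire path $m_1(s)m_2(s)$ stays inside the open set $\{m : \|m-\Id\|<1\}$ where $\log$ converges as a power series; this is precisely where the hypothesis $\|m_i - \Id\| < \frac12$ (rather than merely $<1$) is used, through the spectral estimate above. Once that is established, the rest of the argument uses only elementary facts about $\exp$ and $\log$ as matrix power series and the connectedness of $[0,1]$.
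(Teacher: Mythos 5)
Your proof is correct and follows the same two-step strategy as the paper: use $\det(\exp(A)) = \exp(\Tr A)$ and multiplicativity of $\det$ to pin down the difference $D$ modulo $2\pi i\Z$, then deform to the identity and use that a continuous $2\pi i\Z$-valued function on $[0,1]$ is constant. The only real difference is the choice of homotopy: you use the geodesic path $s \mapsto \exp(sX_1)\exp(sX_2)$, which stays inside $U(n)$ and lets you control $\|m_i(s)-\Id\|$ cleanly by a spectral argument, whereas the paper interpolates linearly, $t \mapsto (t m_1 + (1-t)\Id)m_2$, leaving the unitary group but keeping the norm estimates elementary via Lemma~\ref{anal}. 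Both paths work; this is a cosmetic variation rather than a genuinely different argument, so I would regard it as essentially the paper's proof.
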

\begin{proof}
First note that $||m_1m_2-1||<\frac12\cdot2=1$ by Lemma~\ref{anal} so the expression is well-defined. Then
$$\exp(\Tr(\log(m_1m_2))-\Tr(\log(m_1))-\Tr(\log(m_2))=\frac{\det(m_1m_2)}{\det(m_1)\det(m_2)}=1$$
so
$$\Tr(\log(m_1m_2))-\Tr(\log(m_1))-\Tr(\log(m_2))\in2\pi i\Z.$$
Then
$$\Tr(\log((m_1t+(1-t)\Id_{\C^n})m_2)-\Tr(\log(m_1t+(1-t)\Id_{\C^n})-\Tr(\log(m_2))$$
is well defined for all $t\in[0,1]$ by and in $2\pi i\Z$ by the same argument above. Because this expression depends continuously on $t$ we must have that it is constant in $t$. Plugging in $t=0$ we must have that the expression is uniquely zero.
\end{proof}

\subsection{A Homological Version of the Winding Number Argument}
The idea of this section is to find a pairing between maps from $\Gamma$ to $M_n$ that are ``almost multaplicative'' and elements of $2$-cycles in $C_2(\Gamma)$. In general how ``close'' to being multiplicative depends on the specific element of $C_2(\Gamma)$ we pair with.

\begin{defn}\label{pairing}
Suppose that $c\in C_2(\Gamma)$ is expressed by the formula
$$c=\sum_{i=1}^N x_i[a_i|b_i]$$
so that $(a_i,b_i)=(a_j,b_j)$ implies $i=j$. The {\em support} of $c$ is the set of ordered pairs $\{(a_i,b_i)\}_{i=1}^N$. The {\em boundary support of $c$} is the set of elements of $\Gamma$, $\{a_i,b_i,a_ib_i\}_{i=1}^N$. Then we say $\rho:\Gamma\rightarrow M_n(\C)$ is {\em $\varepsilon$-almost multiplicative on the support of $c$} if $\rho(a_i),\rho(b_i)\in\Gl_n(\C)$ and
$$||\rho(a_ib_i)\rho(a_i)^{-1}\rho(b_i)^{-1}-\Id_{\C^n}||<\varepsilon$$
for each $i$. In this case, if additionally $\varepsilon\le1$, we define
$$\openp\rho,c\closep=\frac{1}{2\pi i}\sum_{j=1}^Nx_j\Tr(\log(\rho(a_jb_j)\rho(b_j)^{-1}\rho(a_j)^{-1}))$$
where $\log$ is defined as a power series centered at 1.
\end{defn}

This is clearly $\Z$-linear in the second entry, in the sense that
$$\openp\rho,c_1\pm c_2\closep=\openp\rho,c_1\closep\pm\openp\rho,c_2\closep$$
when the right side is well-defined. Due to potential cancellation, the support of $c_1+c_2$ may be smaller than the support of $c_1$ union the support of~$c_2$. It is also ``linear'' in the first entry in the sense that
$$\openp\rho_1\oplus\rho_2,c\closep=\openp\rho_1,c\closep+\openp\rho_2,c\closep;$$
if one side if this equality is well-defined then so is the other because
$$||(\rho_1\oplus\rho_2)(gh)-(\rho_1\oplus\rho_2)(g)(\rho_1\oplus\rho_2)(h)||=\max_{i}||\rho_i(gh)-\rho_i(g)\rho_i(h)||.$$
\begin{prop}
If $\partial c=0$ then
$$\openp\rho,c\closep\in\Z.$$
\end{prop}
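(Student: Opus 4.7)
The plan is to exponentiate $2\pi i \langle \rho, c\rangle$ and show the result equals $1$, which forces $\langle \rho, c\rangle$ to be an integer. The fundamental identity driving everything is that for any $M \in \Gl_n(\C)$ with $\|M - \Id\| < 1$, the power series definition of $\log$ gives $\det(M) = \exp(\Tr(\log M))$, so traces of logs convert to logs of determinants modulo $2\pi i \Z$.

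First I would use this identity to rewrite
\[
\exp(2\pi i \langle \rho, c\rangle) \;=\; \prod_{j=1}^N \det\bigl(\rho(a_j b_j)\rho(b_j)^{-1}\rho(a_j)^{-1}\bigr)^{x_j}.
\]
Because each $\rho(a_j), \rho(b_j), \rho(a_j b_j)$ lies in $\Gl_n(\C)$ (this is part of the definition of being $\varepsilon$-almost multiplicative on the support of $c$) and the determinant is a group homomorphism $\Gl_n(\C) \to \C^\times$, the right-hand side factors as
\[
\prod_{j=1}^N \det(\rho(a_jb_j))^{x_j}\,\det(\rho(a_j))^{-x_j}\,\det(\rho(b_j))^{-x_j}.
\]

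The next step is to reindex this product over elements of $\Gamma$. For each $g \in \Gamma$, the exponent of $\det(\rho(g))$ in the above is
\[
\sum_{j:\,a_jb_j=g} x_j \;-\; \sum_{j:\,a_j=g} x_j \;-\; \sum_{j:\,b_j=g} x_j,
\]
which is precisely the negative of the coefficient of $[g]$ in $\partial c = \sum_j x_j([a_j] - [a_jb_j] + [b_j])$. Since $\partial c = 0$, every such exponent vanishes, so the product equals $1$. Therefore $\exp(2\pi i \langle \rho, c\rangle) = 1$, which is the desired conclusion.

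The main subtlety, and the reason the lemma is not utterly trivial, is justifying the passage from the sum of $\Tr\log(M_j)$ to the \emph{product} of $\det(M_j)$: a priori each $\Tr \log(M_j)$ is only well-defined modulo $2\pi i \Z$, but since we exponentiate before invoking any additive splitting, these ambiguities are harmless. The determinant is multiplicative on honest matrix products, so we do not need an analogue of Lemma~\ref{anal2} here; that lemma would only be needed if we wanted to split $\Tr \log(\rho(a_jb_j)\rho(b_j)^{-1}\rho(a_j)^{-1})$ additively, which we avoid. The bookkeeping of rearranging the product by elements of $\Gamma$ is routine once one observes that the three sums of $x_j$'s above are exactly the three terms in the boundary formula.
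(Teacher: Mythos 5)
Your proof is correct and takes essentially the same approach as the paper's: exponentiate $2\pi i \langle\rho,c\rangle$, convert traces of logs to determinants, use multiplicativity of $\det$, and observe the exponents match the coefficients of $-\partial c = 0$. The paper packages your reindexing step more tersely by defining a homomorphism $\varphi: C_1(F) \to \C^\times$ sending $[g] \mapsto \det(\rho(g))$ and writing the product as $\varphi(-\partial c) = \varphi(0) = 1$, but the content is identical.
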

\begin{proof}
Let $F$ be the boundary support of $c$ and let $C_1(F)$ be the subgroup of $C_1(\Gamma)$ spanned by elements of the form $[g]$ where $g\in F$. Define a homomorphism $\varphi:C_1(F)\rightarrow \C^\times$ by taking $[g]\mapsto\det(\rho(g))$. This is well-defined because $C_1(F)$ is a free abelian group and $\det(\rho(g))\in\C^\times$ since $\rho(F)\subset\Gl_n(\C)$. Then we see that
$$\exp(2\pi i\openp\rho,c\closep)=\prod_{j=1}^N\det((\rho(a_jb_j)\rho(b_j)^{-1}\rho(a_j)^{-1})^{x_i})=\varphi(-\partial c)=\varphi(0)=1.$$
\end{proof}

\begin{defn}
If $c$ is a 2-cycle on $\Gamma$ with boundary support $F$ and $\rho_0$ and $\rho_1$ are maps from $\Gamma$ to $\Gl_n(\C)$ that are 1-almost multiplicative on the support of~$c$ we say that $\rho_0$ and $\rho_1$ are {\em homotopy equivalent on the boundary support of~$c$} if the following conditions are met. There is a family of functions
$$\rho^t:\Gamma\rightarrow M_n(\C)$$
continuous in $t$ so that $\rho^0(g)=\rho_0(g)$ and $\rho^1(g)=\rho_1(g)$ for all $g\in F$ and for all~$t$, $\rho^t$ is 1-almost multiplicative on the support of $c$.
\end{defn}

\begin{prop}\label{homotop}
Let $c$ be a 2-cycle on $\Gamma$ and let $0<\varepsilon<1$. Let $\rho_0:\Gamma\rightarrow \Gl_n(\C)$ be $\varepsilon$ multiplicative on the support of $c$ and $\rho_1:\Gamma\rightarrow U(n)$. 
\begin{enumerate}
    \item If $$||\rho_0(g)-\rho_1(g)||<\frac{1-\varepsilon}{24}$$ for all $g$ in the boundary support of $c$ then $\rho_0$ and $\rho_1$ are homotopy equivalent in the boundary support of $c$ and $\rho_1$ is 1-almost multiplicative on the support of $c$.
    \item If $\rho_0$ and $\rho_1$ are homotopy equivalent in the boundary support of $c$ then $\openp\rho_0,c\closep=\openp\rho_1,c\closep$.
\end{enumerate}
\end{prop}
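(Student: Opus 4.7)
The plan is to treat the two parts essentially independently, with (1) being a quantitative computation using Lemma~\ref{anal} and (2) being the standard ``continuous integer-valued function is constant'' argument combined with the previous proposition.

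For part (1), I would define the homotopy by linear interpolation: set $\rho^t(g) = (1-t)\rho_0(g) + t\rho_1(g)$ for $g$ in the boundary support $F$ of $c$, and extend arbitrarily (e.g., by $\rho^t(g) = \rho_0(g)$) off of $F$. Write $\delta = (1-\varepsilon)/24$. For each $g \in F$, we have $\|\rho^t(g) - \rho_0(g)\| = t\|\rho_1(g) - \rho_0(g)\| < \delta < 1/2$, so $\|\rho^t(g)\| < 1 + \delta < 2$ and, applying part~(2) of Lemma~\ref{anal} with $a = \rho^t(g)$ and $u = \rho_0(g)$, the element $\rho^t(g)$ is invertible with $\|\rho^t(g)^{-1} - \rho_0(g)^*\| < 2\delta$ and $\|\rho^t(g)^{-1}\| < 2$. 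Then for each $(a,b)$ in the support of $c$, apply part~(1) of Lemma~\ref{anal} to the three-term products $\rho^t(ab)\rho^t(a)^{-1}\rho^t(b)^{-1}$ and $\rho_0(ab)\rho_0(a)^{-1}\rho_0(b)^{-1}$ with $N=3$, $M < 2$, and per-factor error at most $2\delta$, yielding
\[\bigl\|\rho^t(ab)\rho^t(a)^{-1}\rho^t(b)^{-1} - \rho_0(ab)\rho_0(a)^{-1}\rho_0(b)^{-1}\bigr\| < 3 \cdot 2^2 \cdot 2\delta = 24\delta = 1-\varepsilon.\]
Combined with $\|\rho_0(ab)\rho_0(a)^{-1}\rho_0(b)^{-1} - \Id\| < \varepsilon$ and the triangle inequality, this gives $\|\rho^t(ab)\rho^t(a)^{-1}\rho^t(b)^{-1} - \Id\| < 1$, establishing 1-almost multiplicativity of $\rho^t$ on the support of $c$ for every $t \in [0,1]$, hence also for $t=1$, so $\rho_1$ is 1-almost multiplicative on the support of $c$.

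For part (2), suppose $\rho^t$ is a homotopy of functions that are all 1-almost multiplicative on the support of $c$. The quantity
\[\langle \rho^t, c\rangle = \frac{1}{2\pi i} \sum_{j=1}^N x_j \Tr\bigl(\log(\rho^t(a_jb_j)\rho^t(b_j)^{-1}\rho^t(a_j)^{-1})\bigr)\]
is well-defined for every $t$ because each argument of $\log$ lies within $1$ of $\Id$ (so the power series converges), and it is continuous in $t$ because: $\rho^t(g)$ depends continuously on $t$ for $g \in F$, inversion is continuous on $\Gl_n(\C)$, multiplication and trace are continuous, and the log series converges uniformly on compact subsets of $\{x : \|x - \Id\| < 1\}$. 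Since $c$ is a 2-cycle, the previous proposition gives $\langle \rho^t, c\rangle \in \Z$ for every $t$. A continuous integer-valued function on $[0,1]$ is constant, so $\langle \rho_0, c\rangle = \langle \rho^0, c\rangle = \langle \rho^1, c\rangle = \langle \rho_1, c\rangle$.

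The only mildly delicate step is the constant-counting in part (1): one must keep track of the factor of $2$ coming from inverting via Lemma~\ref{anal}(2), the factor $M^{N-1} = 4$ with $N=3$ from Lemma~\ref{anal}(1), and the factor $N=3$ itself, producing exactly the denominator $24$ in the hypothesis. The choice of extension of $\rho^t$ off of $F$ is irrelevant because both the hypothesis and the conclusion refer only to the values of $\rho^t$ on $F$.
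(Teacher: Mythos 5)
Your proof is correct and follows essentially the same route as the paper: linear interpolation $\rho^t=(1-t)\rho_0+t\rho_1$ combined with Lemma~\ref{anal} parts (1) and (2) for the quantitative estimate in part (1), and the ``continuous $\Z$-valued function on $[0,1]$ is constant'' argument for part (2). Your bookkeeping of the constants ($N=3$, $M=2$, per-factor error $2\delta$, giving $24\delta=1-\varepsilon$) is exactly the computation implicit in the paper's terser proof.
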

\begin{proof}
(1) Define $\rho^t$ to be
$$t\rho_1+(1-t)\rho_0.$$
Then for each $g$ in the boundary support of $c$ we must have 
$$||\rho^t(g)-\rho_0(g)||<\frac{1-\varepsilon}{24}.$$
Applying Lemma~\ref{anal} part 2 this gives us
$$||\rho^t(g)^{-1}-\rho_0(g)^{-1}||<\frac{1-\varepsilon}{12}.$$
Then for $(a_i,b_i)$ in the support of $c$ we have 
\begin{align*}
||\rho^t(a_ib_i)\rho^t(b_i)^{-1}\rho^t(a_i)^{-1}-1||&<||\rho^t(a_ib_i)\rho^t(b_i)^{-1}\rho^t(a_i)^{-1}-\rho_0(a_ib_i)\rho_0(b_i)^{-1}\rho_0(a_i)^{-1}||+\varepsilon\\
&\le1.
\end{align*}
The last step is using Lemma~\ref{anal} part 1 with ``$N$''=3, ``$M$''=2, and ``$\varepsilon$''=$\frac{1-\varepsilon}{12}$. Applying this to $t=1$ we get that 

(2) The function
$$t\mapsto\openp\rho^t,c\closep$$
is a continuous function from $[0,1]$ to $\Z$ so it must be constant.
\end{proof}
\begin{thm}\label{obstruction}
If $\rho_0$ is a genuine representation, $\rho_1$ is a function from~$\Gamma$ to $U(n)$, $c$ is a 2-cocycle on $\Gamma$ and $$||\rho_1(g)-\rho_0(g)||<\frac{1}{24}$$
for all $g$ in the boundary support of $c$ then $\rho_1$ is 1-multiplicative on the support of $c$ and $\openp\rho_1,c\closep=0$.
\end{thm}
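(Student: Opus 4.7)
The plan is to deduce this essentially for free from the two preceding propositions, once we observe that a genuine representation is $\varepsilon$-almost multiplicative for every $\varepsilon > 0$ and that $\langle \rho_0, c\rangle$ vanishes on the nose.

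First I would note that since $\rho_0$ is a homomorphism, for every pair $(a_i, b_i)$ in the support of $c$ we have $\rho_0(a_i b_i)\rho_0(a_i)^{-1}\rho_0(b_i)^{-1} = \Id$, so $\rho_0$ is $\varepsilon$-almost multiplicative on the support of $c$ for every $\varepsilon \in (0,1)$. The boundary support $F$ of $c$ is a finite set, so
\[
M := \max_{g \in F}\|\rho_1(g) - \rho_0(g)\| < \tfrac{1}{24}
\]
by strict inequality in the hypothesis. Thus I can pick $\varepsilon \in (0,1)$ small enough that $M < (1-\varepsilon)/24$, which lets me invoke part (1) of the previous proposition with this $\varepsilon$.

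Next, applying that proposition yields two conclusions: $\rho_1$ is $1$-almost multiplicative on the support of $c$ (which is the first claim of the theorem), and $\rho_0$ and $\rho_1$ are homotopy equivalent on the boundary support of $c$. Then part (2) of the same proposition gives $\langle \rho_0, c\rangle = \langle \rho_1, c\rangle$.

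Finally, I compute $\langle \rho_0, c\rangle$ directly from Definition~\ref{pairing}. Since $\rho_0$ is a genuine representation, $\rho_0(a_j b_j)\rho_0(b_j)^{-1}\rho_0(a_j)^{-1} = \Id_{\C^n}$ for every $j$, so $\log(\Id_{\C^n}) = 0$ and each summand in the defining formula for $\langle \rho_0, c\rangle$ vanishes. Combining, $\langle \rho_1, c\rangle = 0$, which is the second claim. There is no real obstacle here; the only subtlety is the strict inequality $\varepsilon > 0$ required by the previous proposition, which is handled by the finiteness of $F$ and the strictness of the hypothesis $\|\rho_1(g) - \rho_0(g)\| < 1/24$.
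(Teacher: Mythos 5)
Your argument is correct and is exactly how the paper intends Theorem~\ref{obstruction} to be read: the paper gives no explicit proof, treating it as an immediate corollary of the preceding proposition, and you have supplied precisely the details (choice of $\varepsilon$ via finiteness of the boundary support, both parts of the previous proposition, and the direct computation $\langle\rho_0,c\rangle=0$) that make that deduction rigorous.
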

\begin{proof}
This follows from Proposition~\ref{homotop}, taking the limit as $\varepsilon\rightarrow0$.
\end{proof}

This is sufficient for our purposes, but for conceptual clarity it would be nice to show that this pairing depends only on homology class, not on the choice of cycle representative. We do not have a result that is quite this strong, but we can show that it is ``eventually true'' for asymptotic homomoprhisms.

\begin{thm}
Let $c=\partial d$ be a 2-boundary in $C^2(\Gamma)$ and let $\rho_n:\Gamma\rightarrow U(N_n)$ be an asymptotic homomorphism. Then for large enough $n$ we have $\openp\rho_n,c\closep=0$
\end{thm}
\begin{proof}
By linearity we may reduce the case that $c=\partial[g_1|g_2|g_3]$. In this case we have that
$$c=-[g_1|g_2]+[g_1|g_2g_3]-[g_1g_2|g_3]+[g_2|g_3].$$
For large enough $n$ we have that $\rho_n$ is multiplicative enough that we may apply Lemma~\ref{anal2}. Thus
\begin{align*}
2\pi i\openp\rho_n,c\closep=&\Tr(\log(\rho_n(g_1)\rho_n(g_2)\rho_n(g_1g_2)^{-1}\rho_n(g_1g_2)\rho_n(g_3)\rho_n(g_1g_2g_3)^{-1}\\
&\cdot\rho_n(g_1g_2g_3)\rho_n(g_2g_3)^{-1}\rho_n(g_1)^{-1}\rho_n(g_2g_3)\rho_n(g_3)^{-1}\rho_n(g_2)^{-1}))\\
=&\Tr(\log(\rho_n(g_1)\rho_n(g_2)\rho_n(g_3)\rho_n(g_2g_3)^{-1}\rho_n(g_1)^{-1}\rho_n(g_2g_3)\rho_n(g_3)^{-1}\rho_n(g_2)^{-1})).
\end{align*}
Now since the complex unitary group is path connected we can make a path $u_t:[0,1]\rightarrow U(n)$ so that $u_0=\rho(g_1)$ and $u_1=\Id_{\C^n}$. Then
\begin{align*}
||u_t\rho_n(g_2)\rho_n(g_3)\rho_n(g_2g_3)^{-1}u_t^{-1}-\Id_{\C^n}||&=||\rho_n(g_2)\rho_n(g_3)\rho_n(g_2g_3)^{-1}-u_t^{-1}u_t||\\
&=||\rho_n(g_2)\rho_n(g_3)\rho_n(g_2g_3)^{-1}-\Id_{\C^{N_n}}||.
\end{align*}
For large enough $n$ this will be less than small, so $$\log(u_t\rho_n(g_1)\rho_n(g_2)\rho_n(g_2g_3)^{-1}u_t^{-1}\rho_n(g_2g_3)\rho_n(g_3)^{-1}\rho_n(g_2)^{-1})$$
will be well-defined. Moreover since 
$$\det(u_t\rho_n(g_2)\rho_n(g_3)\rho_n(g_2g_3)^{-1}u_t^{-1}\rho_n(g_2g_3)\rho_n(g_3)^{-1}\rho_n(g_2)^{-1})=1$$
we must have that
$$\Tr(\log(u_t\rho_n(g_2)\rho_n(g_3)\rho_n(g_2g_3)^{-1}u_t^{-1}\rho_n(g_2g_3)\rho_n(g_3)^{-1}\rho_n(g_2)^{-1})\in2\pi i\Z.$$
Because this is a discrete space the values cannot depend on $t$. We conclude that
\begin{align*}
2\pi i\openp\rho_n,c\closep&=\Tr(\log(\rho_n(g_2)\rho_n(g_3)\rho_n(g_2g_3)^{-1}\rho_n(g_2g_3)\rho_n(g_3)^{-1}\rho_n(g_2)^{-1})\\
&=0.
\end{align*}
\end{proof}

\subsection{Finite Type Cohomology}

\begin{defn}\label{finiteT}
Let $\Gamma$ be a countable discrete group and $[\sigma]\in H^2(\Gamma;\Z)$ be given by the central extension
$$\begin{tikzcd}
e\arrow{r} & \Z
\arrow{r}{\iota} &
\Tilde{\Gamma} \arrow{r}{\varphi}
& \Gamma\arrow{r}{}& e.
\end{tikzcd}$$
We say that $[\sigma]$ is {\em finite type} if $\tilde{\Gamma}$ has a sequence of finite index subgroups $\{\Gamma_k\}_{k\in\N}$ so that
$$\iota(\Z)\cap\bigcap_{k}\Gamma_k=\{e\}.$$
\end{defn}

\begin{rmk}\label{rfinite}
Clearly if $\tilde{\Gamma}$ is residually finite $[\sigma]$ is of finite type.
\end{rmk}

\begin{rmk}\label{normalD}
We may assume that the subgroups in Definition~\ref{finiteT} are normal and decreasing. To achieve normality replace $\Gamma_k$ with the kernel of the action of $\tilde{\Gamma}$ on $\tilde{\Gamma}/\Gamma_k$. To achieve a decreasing sequence replace $\Gamma_k$ with the cumulative intersection of $\Gamma_k$.
\end{rmk}

To develop our formula we will develop an alternate characterization of finite type cohomology classes that can be expressed in terms of the cohomology cochain complex. 
\medskip

Let $\Gamma$ be a discrete group and let $Q$ be a finite quotient of $\Gamma$. Call~$q$ the quotient map from $\Gamma$ to $Q$ and $f^n$ the canonical map from $\Z$ to $\Z/n\Z$. Then $q$ induces a cochain map $q^\#$ from $C^k(Q;\Z/n\Z)$ to $C^k(\Gamma;\Z/n\Z)$ and $f$ induces a map $f_\#^n$ from $C^k(\Gamma;\Z)$ to $C^k(\Gamma;\Z/n\Z)$. These in turn induce maps $q^*:H^*(Q;\Z/n\Z)\rightarrow H^k(\Gamma;\Z/n\Z)$ and $f^n_*:H^*(\Gamma;\Z)\rightarrow H^k(\Gamma;\Z/n\Z)$.

\begin{defn}
If $\sigma$ is a $\Z$-valued $k$-cocycle on $\Gamma$, we say that $\sigma$ is of $n$-$Q$ {\em type} if $f^n_\#(\sigma)=q^\#(\sigma')$ for some $\Z/n\Z$-valued $k$-cocycle on $Q$, $\sigma'$. We say that a cohomology class~$[\sigma]\in H^k(\Gamma;\Z)$ is $n$-$Q$ {\em type} if there is a cohomology class $[\sigma']\in H^k(Q;\Z/n\Z)$ so that $f^n_*([\sigma])=q^*([\sigma'])$.
\end{defn}
$$\begin{tikzcd}
&
\left[\sigma'\right]\in H^k(Q;\Z/n\Z)\arrow{d}{q^*}
\\
\left[\sigma\right]\in H^k(\Gamma;\Z)\arrow{r}{f^n_*}
& H^k(\Gamma;\Z/n\Z).
\end{tikzcd}$$

\begin{ex}\label{z2cos}
Consider $\Gamma=\Z^2$ and the cocycle $\sigma((x_1,x_2),(y_1,y_2))=x_2y_2$. That this is a cocycle is easy to check:
\begin{align*}
\partial\sigma((x_1,x_2),(y_1,y_2),(z_1,z_2))&=x_2y_1-x_2(y_1+z_1)+(x_2+y_2)z_1-y_2z_1\\
&=0.
\end{align*}
This is not a coboundary because 
$$c=[(0,1)|(1,0)]-[(1,0)|(0,1)]$$
is a 2-chain such that
$$\langle\sigma,c\rangle=1.$$
Then if $Q_n=(\Z/n\Z)^2$, and let $q_n:\Z^2\rightarrow Q_n$ be the obvious quotient map. We have that $\sigma$, and hence $[\sigma]$, is of $n$-$Q_n$ type. To show this note that the same formula used for $\sigma$ defines a 2-cochain, $\sigma'\in C^2(Q_n;\Z/n\Z)$. The same computations that show that $\sigma$ is a cocycle also show that $\sigma'$ is a cocycle, and clearly $q_n^\#(\sigma)=f^n_\#(\sigma')$, where $f^n$ is the quotient map from $\Z$ to $\Z/n\Z$.
\end{ex}

\begin{prop}\label{finiteQ}
Suppose that $\Gamma$ is a discrete group and $[\sigma]\in H^2(\Gamma;\Z)$. Let the central extension corresponding to $[\sigma]$ be as follows:
\begin{equation*}
\begin{tikzcd}
e\arrow{r} & \Z
\arrow{r}{\iota} &
\Tilde{\Gamma} \arrow{r}{\varphi}
& \Gamma\arrow{r}{}& e.
\end{tikzcd}
\end{equation*}
The following are equivalent:
\begin{enumerate}
    \item $[\sigma]$ is of finite type;
    \item there are infinitely many $n\in\N$ so that there is a finite quotient of $\tilde{Q}_n$ of $\Tilde{\Gamma}$ so that $\iota(1)$ has order $n$ in the quotient;
    \item there are infinitely many $n\in\N$ so that $\Gamma$ has a finite quotient $Q_n$ so that $[\sigma]$ is of $n$-$Q_n$ type.
\end{enumerate}
\end{prop}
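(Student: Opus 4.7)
The strategy is to prove $(1)\Leftrightarrow(2)$ by an elementary argument about finite-index subgroups of $\iota(\Z)\cong\Z$, and then prove $(2)\Leftrightarrow(3)$ by using the bijection between $H^2(\Gamma;A)$ and equivalence classes of central extensions together with its naturality. I expect the main obstacle to be the direction $(3)\Rightarrow(2)$, where I must promote an equality of cohomology classes to an honest finite quotient of $\Tilde{\Gamma}$ with the correct order of $\iota(1)$.

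For $(1)\Leftrightarrow(2)$, by Remark~\ref{normalD} I may assume the $\Gamma_k$ are normal. Then $\iota(\Z)\cap\Gamma_k$ is a finite-index subgroup of $\iota(\Z)\cong\Z$ (since $\iota(\Z)/(\iota(\Z)\cap\Gamma_k)$ embeds into the finite group $\Tilde{\Gamma}/\Gamma_k$), so it has the form $m_k\iota(\Z)$ for some integer $m_k\ge1$. The defining condition $\iota(\Z)\cap\bigcap_k\Gamma_k=\{1\}$ then becomes $\bigcap_k m_k\Z=\{0\}$, which is equivalent to $\{m_k\}$ being unbounded, and each $\Tilde{\Gamma}/\Gamma_k$ witnesses (2) at $n=m_k$. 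Conversely, given (2), the kernels $\Gamma_n:=\ker(\Tilde{\Gamma}\twoheadrightarrow\Tilde{Q}_n)$ satisfy $\iota(\Z)\cap\Gamma_n=n\iota(\Z)$ (because $\iota(1)$ has order exactly $n$ in $\Tilde{Q}_n$), and intersecting over an unbounded set of such $n$ gives (1).

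For $(2)\Rightarrow(3)$, given $\pi:\Tilde{\Gamma}\twoheadrightarrow\Tilde{Q}_n$ with $\iota(1)$ of order $n$, the image $\pi(\iota(\Z))\cong\Z/n\Z$ is central in $\Tilde{Q}_n$, and $Q_n:=\Tilde{Q}_n/\pi(\iota(\Z))$ is a finite quotient of $\Gamma$ via the induced surjection $q:\Gamma\twoheadrightarrow Q_n$ (the map $\Tilde{\Gamma}\to Q_n$ kills $\iota(\Z)$ and hence factors through $\varphi$). This yields a morphism of central extensions in which the left vertical arrow is $f^n:\Z\to\Z/n\Z$; picking a set-theoretic section $\theta$ of $\varphi$ and using $\pi\circ\theta$ as a section of the bottom row, the cocycle formula $\sigma(g,h)=\theta(g)\theta(h)\theta(gh)^{-1}$ (from the discussion after Theorem IV.3.12 of \cite{coho}) produces representatives of $[\sigma]$ and $[\sigma']$ satisfying $f^n_\#\sigma=q^\#\sigma'$ at the cochain level. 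For $(3)\Rightarrow(2)$, let $\Tilde{Q}_n$ be the finite central extension realizing $[\sigma']$, and let $\Tilde{\Gamma}_n:=\Tilde{\Gamma}/n\iota(\Z)$ be the pushout realizing $f^n_*[\sigma]$. The equality $q^*[\sigma']=f^n_*[\sigma]$ gives an equivalence of central extensions between $\Tilde{\Gamma}_n$ and the pullback of $\Tilde{Q}_n$ along $q$; composing $\Tilde{\Gamma}\twoheadrightarrow\Tilde{\Gamma}_n$ with this equivalence and then with the second projection to $\Tilde{Q}_n$ (which is surjective because $q$ is) produces a surjection $\Tilde{\Gamma}\twoheadrightarrow\Tilde{Q}_n$ under which $\iota(1)$ still has order $n$ (since the equivalence is the identity on the central copy of $\Z/n\Z$), proving (2).
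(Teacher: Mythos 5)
Your overall strategy coincides with the paper's: $(1)\Leftrightarrow(2)$ via finite-index subgroups of $\iota(\Z)$ after passing to normal $\Gamma_k$, and $(2)\Leftrightarrow(3)$ via the functoriality of the central-extension description of $H^2$. Your $(1)\Leftrightarrow(2)$ argument with the indices $m_k$ is a clean repackaging of the paper's factorial trick, and your $(3)\Rightarrow(2)$ correctly uses the pullback of $\Tilde Q_n$ along $q$ together with the identification of $\Tilde\Gamma/n\iota(\Z)$ with the pushout realizing $f^n_*[\sigma]$ (this is exactly the content of the exercise in Brown that the paper cites).

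There is, however, a genuine gap in your $(2)\Rightarrow(3)$. You propose to ``use $\pi\circ\theta$ as a section of the bottom row,'' but for an arbitrary section $\theta$ of $\varphi$ the composite $\pi\circ\theta:\Gamma\to\Tilde Q_n$ need not factor through $q$. If $q(g)=q(g')$, then $\pi(\theta(g))$ and $\pi(\theta(g'))$ lie over the same element of $Q_n$, but they can differ by a nontrivial element of the central $\Z/n\Z$, so $\pi\circ\theta$ is generally not well defined as a map on $Q_n$. The fix is standard but must be stated: first choose a set-theoretic section $\theta_n'$ of $\varphi_n':\Tilde Q_n\to Q_n$, and then lift it to a compatible section $\theta$ of $\varphi$ satisfying $\pi\circ\theta=\theta_n'\circ q$. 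Such a lift exists because for each $g\in\Gamma$ the fiber $\varphi^{-1}(g)$ is a coset of $\iota(\Z)$, whose image under $\pi$ is a full coset of $\pi(\iota(\Z))\simeq\Z/n\Z$ in the fiber $\varphi_n'^{-1}(q(g))$, so it hits $\theta_n'(q(g))$. With compatible sections chosen this way, your cochain-level identity $f^n_\#\sigma=q^\#\sigma'$ does hold and the argument goes through; note this actually proves the slightly stronger statement that a cocycle representative of $[\sigma]$ (not just the class) is of $n$-$Q_n$ type, which is more than condition $(3)$ requires.
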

\begin{proof}
(1)$\Longrightarrow$(2) Let $\Gamma_k$ be a sequence of subgroups as in Definition~\ref{finiteT} and assume that they are normal as in Remark~\ref{normalD}. For $\ell\in\N$ pick~$\Gamma_k$ so that $\iota(1)^{\ell!}\not\in\Gamma_k$. Then let $n$ be the order of $\iota(1)$ in $\Tilde{\Gamma}/\Gamma_k$. Call $\tilde{Q}_n=\Tilde{\Gamma}/\Gamma_k$. Note that $n>\ell$ so letting $\ell\rightarrow\infty$ we get the desired family of subgroups for infinitely many distinct $n\in\N$.
\medskip

\noindent
(2)$\Longrightarrow$(1) Let $\Gamma_n$ be the kernel of the map from $\tilde{\Gamma}$ to $\tilde{Q}_n$. Then for any $\ell\in\Z\setminus\{0\}$ we can pick $n>|\ell|$ so that $\iota(1)^\ell\not\in\Gamma_n$.
\medskip

\noindent
(2)$\Longrightarrow$(3) By assumption, for infinitely many $n$ we have a diagram:

\begin{equation*}\begin{tikzcd}
e\arrow{r} & \Z
\arrow{r}{\iota}\arrow{d}{f^n} &
\Tilde{\Gamma}\arrow{d}{\Tilde{q}_n} \arrow{r}{\varphi}
& \Gamma\arrow{r}{}\arrow{d}{q_n}& e
\\
e\arrow{r} & \Z/n\Z
\arrow{r}{} &
\Tilde{Q}_n \arrow{r}{\varphi_n'}
& Q_n\arrow{r}{}& e
\end{tikzcd}\tag{3.1}\end{equation*}
Then we may factor the vertical maps as follows:
\begin{equation*}\begin{tikzcd}
e\arrow{r} & \Z
\arrow{r}{\iota}\arrow{d}{f^n} &
\Tilde{\Gamma}\arrow{d}{} \arrow{r}{\varphi}
& \Gamma\arrow{r}{}\arrow{d}{\Id}& e
\\
e\arrow{r} & \Z/n\Z\arrow{d}{\Id}
\arrow{r}{} &
\Tilde{\Gamma}/{\langle\iota(1)^n\rangle}\arrow{d} \arrow{r}{}
& \Gamma\arrow{r}{}\arrow{d}{q_n}& e\\
e\arrow{r} & \Z/n\Z
\arrow{r}{} &
\Tilde{Q}_n \arrow{r}{\varphi_n'}
& Q_n\arrow{r}{}& e
\end{tikzcd}\tag{3.2}\end{equation*}
By \cite[Chapter IV.3 Exercise 1]{coho}, $f^n_*([\sigma])$ corresponds to the middle row of diagram (3.2). Let $[\sigma']$ be the element of $H^2(Q_n;\Z/n\Z)$ corresponding to the bottom row of diagram (3.2). Again using \cite[Chapter IV.3 Exercise 1]{coho}  we get that $q_n^*([\sigma'])$ corresponds to the middle row as well. It follows that $f^n_*([\sigma])=q_n^*([\sigma'])$. 
\medskip

\noindent
(3)$\Longrightarrow$(2) Using \cite[Chapter IV.3 Exercise 1]{coho} as we did for the other direction we get that diagram (3.2) must exist for infinitely many $n$. Composing the vertical maps we get that quotients such as diagram (3.1) must exist for infinitely many $n$ as well.
\end{proof}

This theorem motivates a definition that extends the finite type concept to cocycle representatives of the cohomology class.

\begin{defn}
If $\sigma$ is a $\Z$-valued $2$-cocycle on $\Gamma$, we say that $\sigma$ is of {\em finite type} if for infinitely many $n\in\N$ there is a finite quotient $Q_n$ of $\Gamma$ so that that $\sigma$ is of $n$-$Q_n$ type. 
\end{defn}

The cocycle defined in Example~\ref{z2cos} is finite type. From Proposition~\ref{finiteQ} it follows that its cohomology class is as well.

\begin{rmk}\label{representative}
Obviously if $\sigma$ is of $n$-$Q$ type then so is $[\sigma]$. Conversely, if $[\sigma]\in H^k(\Gamma;\Z)$ is of $n$-$Q$ type then there exists some $\omega\in[\sigma]$ that is of $n$-$Q$ type. To show this let $q:\Gamma\rightarrow Q$ be the quotient map and let $f:\Z\rightarrow\Z/n\Z$ be the usual map. We can take $\dot\omega\in f_*([\sigma])$ so that $\dot\omega\in\im(q^\#)$. Let $\dot\omega=\partial\dot\alpha+f_\#(\sigma)$ where $\dot\alpha\in C^{1}(\Gamma;\Z/n\Z)$. Then find $\alpha\in C^{1}(\Gamma;\Z)$ so that $f_\#(\alpha)=\dot\alpha$. Then letting $\omega=\partial\alpha+\sigma$ we see that $f_\#(\omega)=\dot\omega\in\im(q^\#)$. Thus $\omega$ is of $n$-$Q$ type. However, a finite type cohomology class does {\em not} obviously have a finite type representative, because the choice of representative of~$[\sigma]$ might depend on~$n$.
\end{rmk}

\subsection{Constructing Asymptotic Representations From Cocycles}

We start by defining our formula for an asymptotic representation.

\begin{prop}\label{rhon}
Suppose that $\Gamma$ is a discrete group and $[\sigma]\in H^2(\Gamma;\Z)$. Let $Q_n$ be a finite quotient of $\Gamma$ so that $[\sigma]$ is of $n$-$Q_n$ type; this exists by Remark~\ref{representative}. Let $\sigma_n$ be a representative of $[\sigma]$ so that $\sigma_n$ is of $n$-$Q_n$ type. Let $\alpha_n$ be a 1-cochain so that $\sigma_n+\partial\alpha_n=\sigma$. Let $\hat{\chi}_n(g_1,g_2)=\exp\left(\frac{2\pi i}{n}(\alpha_n(g_1)+\sigma_n(g_1,g_2))\right)$. Then define $V_n=\ell^2(Q_n)$. Treat $\bar{g}$ as a basis element for $V_n$ where $g\in\Gamma$ and $\bar{g}$ is its image in $Q_n$. Then there is a well-defined function $\rho_n:\Gamma\rightarrow U(V_n)$ that obeys the formula
$$\rho_n(g_1)\Bar{g}_2=\hat{\chi}_n(g_1,g_2)\Bar{g}_1\Bar{g}_2.$$
\end{prop}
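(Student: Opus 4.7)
The plan is to verify two things about the formula $\rho_n(g_1)\bar g_2 = \hat\chi_n(g_1,g_2)\,\bar g_1\bar g_2$: first, that it is independent of the choice of lift $g_2\in\Gamma$ of the basis vector $\bar g_2\in Q_n$, and second, that the resulting linear operator on $V_n$ is unitary. Nothing about multiplicativity of $\rho_n$ is being claimed in this proposition, so those are the only two obligations.

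For well-definedness, I would take two lifts $g_2,g_2'\in\Gamma$ with $\bar g_2=\bar g_2'$ in $Q_n$. Since $q_n$ is a group homomorphism, $\bar g_1\bar g_2 = \bar g_1\bar g_2'$, so the basis vector appearing on the right-hand side is unambiguous, and the only remaining issue is to check $\hat\chi_n(g_1,g_2)=\hat\chi_n(g_1,g_2')$, equivalently $\sigma_n(g_1,g_2)\equiv\sigma_n(g_1,g_2')\pmod n$. This is precisely where I would use the $n$-$Q_n$ type hypothesis on $\sigma_n$: unpacking $f^n_\#(\sigma_n)=q_n^\#(\sigma_n')$ pointwise gives $\sigma_n(g_1,g_2)\equiv\sigma_n'(\bar g_1,\bar g_2)\pmod n$, so $\sigma_n(g_1,g_2)\bmod n$ depends only on $(\bar g_1,\bar g_2)$. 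With $g_1$ and $\bar g_2$ fixed, the exponent $\alpha_n(g_1)+\sigma_n(g_1,g_2)$ is thereby pinned down modulo $n$, and $\hat\chi_n(g_1,g_2)$ is unchanged. Note that the existence of $\alpha_n$ such that $\sigma_n+\partial\alpha_n=\sigma$ is not needed for this step; it is only used later to compare different choices of $\sigma_n$ within the class $[\sigma]$.

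For unitarity, the set $\{\bar g:\bar g\in Q_n\}$ is an orthonormal basis for $V_n=\ell^2(Q_n)$. Under $\rho_n(g_1)$ the basis vector $\bar g_2$ is sent to $\hat\chi_n(g_1,g_2)\bar g_1\bar g_2$. The assignment $\bar g_2\mapsto\bar g_1\bar g_2$ is left multiplication by $\bar g_1$, which permutes $Q_n$, and $\hat\chi_n(g_1,g_2)$ is an $n$th root of unity since $\alpha_n(g_1)+\sigma_n(g_1,g_2)\in\Z$. Hence $\rho_n(g_1)$ sends an orthonormal basis to an orthonormal basis on a finite-dimensional space, which is enough to conclude $\rho_n(g_1)\in U(V_n)$.

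I expect the main obstacle to be largely bookkeeping: keeping track of which quantities live on $\Gamma$ versus on $Q_n$, and which equalities hold on the nose versus modulo $n$. The conceptual content is that the $n$-$Q_n$ type condition is engineered precisely so that the exponent of $\hat\chi_n$ descends modulo $n$ to a function of the images in $Q_n$, which is exactly what the formula needs in order to make sense on $V_n$.
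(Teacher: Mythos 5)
Your proof is correct and follows essentially the same route as the paper's: reduce well-definedness to showing $\sigma_n(g_1,g_2)\equiv\sigma_n(g_1,g_2')\pmod n$ whenever $\bar g_2=\bar g_2'$, obtain this by unpacking the $n$-$Q_n$ type hypothesis $f^n_\#(\sigma_n)=q_n^\#(\sigma_n')$ pointwise, and then observe that $\rho_n(g_1)$ sends the orthonormal basis $\{\bar g\}$ to another orthonormal basis, hence is unitary. Your side remark that the existence of $\alpha_n$ is not needed for this step is also in the spirit of the paper's Remark~\ref{2ndEntry}, which notes that only dependence on the class of the second entry is actually used.
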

\begin{proof}
We will show that $\rho_n$ is well-defined. Suppose that $\bar{g}_2=\bar{g_2'}$. If we show that
$$\sigma_n(g_1,g_2)\equiv\sigma_n(g_1,g_2')\mod n$$
we will have shown that $\rho_n$ is well-defined because $\hat{\chi}_n$ only depends on $\sigma_n$ up to equivalence mod $n$. If $\dot\sigma_n$ is $\sigma_n$ reduced mod $n$ then we have by assumption that $\dot\sigma_n=q^\#(\sigma_n')$ where $q$ is the quotient map from $\Gamma$ to $Q$ and $\sigma_n'$ is a 2-cocycle in $C^2(Q;\Z/n\Z)$. Thus
$$\dot\sigma_n(g_1,g_2)=\sigma_n'(\bar{g}_1,\bar g_2)=\sigma_n'(\bar g_1,\bar{g_2'})=\dot{\sigma}_n(g_1,g_2').$$

Note that $\rho_n(g)$ maps the orthonormal basis $\{h:h\in Q_n\}$ to another orthonormal basis, so it is a unitary.
\end{proof}

\begin{defn}
Suppose $\rho:\Gamma\rightarrow U(k)$ and $\chi$ is an $S^1$ valued 2-cocycle on $\Gamma$. If $\rho$ obeys the formula in $\rho(gh)\rho(h)^{-1}\rho(g)^{-1}=\chi(g,h)\Id_{\C^n}$  is called a {\em projective representation} with cocycle $\chi$.
\end{defn}

We will show that $\rho_n$ is a projective representation in Lemma~\ref{chiform}.

In the case that $\alpha_n=0$ our formula reduces to what is known as the {\em projective left regular representation} for $Q_n$ and $\chi_n$, for example see~\cite{factorsfrom} page 2. The proof of Corollary~B in~\cite{llp} also uses a cohomology class that ``behaves well'' with respect to finite quotients, to make projective representations.

An alternate justification for the formula is as follows. Suppose that there is a finite quotient of a central extension of $\Gamma$ as follows
\begin{equation*}\begin{tikzcd}
e\arrow{r} & \Z
\arrow{r}{\iota}\arrow{d}{f^n} &
\Tilde{\Gamma}\arrow{d}{\Tilde{q}_n} \arrow[shift left]{r}{\varphi}
& \Gamma\arrow{r}{}\arrow{d}{q_n}\arrow[shift left]{l}{\theta}& e
\\
e\arrow{r} & \Z/n\Z
\arrow{r}{\iota'} &
\Tilde{Q}_n \arrow{r}{\varphi_n'}
& Q_n\arrow{r}{}& e
\end{tikzcd}\end{equation*}
and a set theoretic section $\theta$ of the extension. Let $\pi_n$ be the induced representation of $\tilde{Q}_n$ from the character on $\iota'(\Z/n\Z)$ that takes $\iota'(1)\mapsto\exp(2\pi i/n)$. Then $\rho_n=\pi_n\circ\tilde q_n\circ\theta$. Deriving the formula from here is technical.

The discussion at the start of Chapter~3.3 in~\cite{projectiveRep} explains how one should expect a projective representation to come from a splitting and representation of $\tilde\Gamma$ as described above.

\begin{rmk}\label{2ndEntry}
The existence technically only uses the fact that $\dot\sigma(g_1,g_2)$ depends only on $g_1$ and the reduction of $g_2$ in $Q_n$, rather than the image of both $g_1$ and $g_2$ in $Q_n$. We will use this fact in examples to reduce the asymptotics of the dimension of the asymptotic representation.
\end{rmk}

\begin{lemma}\label{chiform}
Let $\Gamma$, $n$, $\sigma$, $Q_n$, and $\rho_n$ be as above. Define $\chi_n\in C^2(\Gamma;S^1)$ by
$$\chi_n(g_1,g_2)=\exp\left(\frac{2\pi i }{n}\sigma(g_1,g_2)\right).$$
Then $\rho_n$ obeys the formula
$$\rho_n(g_1g_2)\rho_n(g_2)^{-1}\rho_n(g_1)^{-1}=\chi_n(g_1,g_2)^{-1}\Id_{V_n}.$$
\end{lemma}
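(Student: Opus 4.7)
The plan is to verify this operator identity by evaluating both sides on an arbitrary basis vector $\bar h \in V_n = \ell^2(Q_n)$. Since $\chi_n(g_1, g_2)\Id_{V_n}$ is a scalar operator, it suffices to track the scalar phase produced by successively applying $\rho_n(g_1)^{-1}$, $\rho_n(g_2)^{-1}$, and $\rho_n(g_1 g_2)$ to $\bar h$. Using the defining formula $\rho_n(g)\bar k = \hat\chi_n(g, k)\overline{gk}$, together with its immediate consequence $\rho_n(g)^{-1}\bar k = \hat\chi_n(g, g^{-1}k)^{-1}\overline{g^{-1}k}$, each of the three factors multiplies by an explicit phase and shifts the basis vector by a group element. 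The three shifts cancel in $Q_n$ because $(g_1g_2)(g_2^{-1}g_1^{-1}) = e$, so the composition is indeed a scalar multiple of $\bar h$, as required.

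After collecting this scalar, taking logarithms and dividing by $2\pi i/n$ reduces the claim to a single mod-$n$ congruence involving $\alpha_n$ at $g_1, g_2, g_1 g_2$ and $\sigma_n$ at the pairs $(g_1g_2, c)$, $(g_1, g_2 c)$, $(g_2, c)$, where $c$ denotes the residue of $\bar h$ after the first cancellation. I would simplify this congruence in two short steps. First, apply the cocycle identity $\partial\sigma_n = 0$ to collapse the three $\sigma_n$-terms into a multiple of $\sigma_n(g_1, g_2)$; this crucially removes the dependence on $c$ and is the one algebraic step with genuine content. Second, rewrite the $\alpha_n$-difference using the relation $\sigma = \sigma_n + \partial\alpha_n$, which converts $\alpha_n(g_1g_2) - \alpha_n(g_1) - \alpha_n(g_2)$ into $\sigma_n(g_1,g_2) - \sigma(g_1,g_2)$. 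After these two substitutions the scalar telescopes to $\exp(2\pi i \sigma(g_1, g_2)/n) = \chi_n(g_1, g_2)$.

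I do not anticipate a serious obstacle: the proof is essentially bookkeeping with 2-cocycle identities. The one conceptually important observation is that, although $\sigma_n$ and $\alpha_n$ each depend on choices made in Proposition~\ref{rhon} and are only determined modulo the $n$-$Q_n$-type ambiguity, the final scalar $\chi_n(g_1, g_2)$ depends only on the cohomological datum $\sigma$. This cancellation of the choices is exactly what the cocycle identity certifies, and it is the structural reason that the formula produces a well-defined almost representation whose multiplicative defect is the explicit cocycle $\chi_n$; in particular, since $\chi_n \to 1$ pointwise as $n \to \infty$, asymptotic multiplicativity will fall out for free in the subsequent results.
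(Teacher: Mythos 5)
Your proposal is correct and follows essentially the same route as the paper: evaluate the three operators on a basis vector, use the inverse formula $\rho_n(g)^{-1}\bar k=\hat\chi_n(g,g^{-1}k)^{-1}\overline{g^{-1}k}$, and then cancel the resulting exponents via the cocycle condition $\partial\sigma_n=0$ together with $\partial\alpha_n=\sigma-\sigma_n$; the paper simply packages these two algebraic steps into a single identity for $\hat\sigma_n(g_1,g_2)=\alpha_n(g_1)+\sigma_n(g_1,g_2)$ before carrying out the operator computation. One small caveat: tracing the signs carefully in your scheme produces $\chi_n(g_1,g_2)^{-1}$ rather than $\chi_n(g_1,g_2)$, and this is in fact what the paper's own proof also arrives at in its final line, so the lemma statement (and the last sentence of your write-up) has a harmless sign slip that does not affect the downstream use in Theorem~\ref{coc2}.
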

\begin{proof}
Define $\hat{\sigma}_n(g_1,g_2)=\alpha_n(g_1)+\sigma_n(g_1,g_2)$. We compute
\begin{align*}
-\hat{\sigma}_n(g_1,&g_2g_3)+\hat{\sigma}_n(g_1g_2,g_3)-\hat{\sigma}_n(g_2,g_3)\\
&=\alpha_n(g_1g_2)-\alpha_n(g_1)-\alpha_n(g_2)-\sigma_n(g_1,g_2g_3)+\sigma_n(g_1g_2,g_3)-\sigma_n(g_2,g_3)\\
&=-\partial\alpha_n(g_1,g_2)-\sigma_n(g_1,g_2)\\
&=-\sigma(g_1,g_2).&({3.3})
\end{align*}
The second equality follows from the fact that $\sigma_n$ is a cocycle. Exponentiating both sides of (3.3) we get
\begin{align*}
\hat{\chi}_n(g_1,g_2g_3)^{-1}\hat{\chi}_n(g_1g_2,g_3)\hat{\chi}_n(g_2,g_3)^{-1}=\chi_n(g_1,g_2)^{-1}&&(3.4)
\end{align*}
Next we claim that
$$\rho_n(g_1)^{-1}\bar{g}_2=\hat{\chi}_n(g_1,g_1^{-1}g_2)^{-1}\bar{g}_1^{-1}\bar{g}_2.$$
To check this it suffices to compute that
\begin{align*}
\rho_n(g_1)\hat{\chi}_n(g_1,g_1^{-1}g_2)\bar{g}_1^{-1}\bar{g}_2&=\hat{\chi}_n(g_1,g_1^{-1}g_2)^{-1}\rho_n(g_1)\bar{g}_1^{-1}\bar{g}_2\\
&=\hat{\chi}_n(g_1,g_1^{-1}g_2)^{-1}\hat{\chi}_n(g_1,g_1^{-1}g_2)\bar{g}_2\\
&=\bar{g}_2.
\end{align*}
Using this we can compute
\begin{align*}
\rho_n(g_1g_2)\rho_n(g_2)^{-1}\rho_n(g_1)^{-1}\Bar{g}_3&=\rho_n(g_1g_2)\rho_n(g_2)^{-1}\hat\chi_n(g_1,g_1^{-1}g_3)^{-1}\Bar{g}_1^{-1}\Bar{g}_3\\
&=\rho_n(g_2g_2)\hat\chi_n(g_2,g_2^{-1}g_1^{-1}g_3)^{-1}\hat\chi_n(g_1,g_1^{-1}g_3)^{-1}\Bar{g}_2^{-1}\Bar{g}_1^{-1}\Bar{g}_3\\
&=\hat\chi_n(g_1g_2,g_2^{-1}g_1^{-1}g_3)\hat\chi_n(g_2,g_2^{-1}g_1^{-1}g_3)^{-1}\hat\chi_n(g_1,g_1^{-1}g_3)^{-1}\Bar{g}_3\\
&=\chi_n(g_1,g_2)^{-1}\Bar{g}_3.
\end{align*}
Here the last step follows from (3.4) applied to $g_1$, $g_2$ and $g_2^{-1}g_2^{-1}g_3$. 
\end{proof}

Now we are ready to prove Theorem~\ref{coc2}

\begin{proof}
First note that by Proposition~\ref{finiteQ} and Remark~\ref{representative} there are infinitely many $n$ so the formula given in Proposition~\ref{rhon} is well-defined.

Now we will show that $\rho_n$ is asymptotically multiplicative. Noting that since $\sigma$ does not depend on $n$ we have that $\chi_n(g_1,g_2)$, defined as in Lemma~\ref{chiform}, goes to 1 as~$n$ goes to infinity. Thus Lemma~\ref{chiform} implies asymptotic multiplicativity.

Now we will show that for large enough $n$, $\rho_n$ is not close to any genuine representation of $\Gamma$ on a particular finite subset of $\Gamma$. From the fact that $[\sigma]\not\in\ker(h)$ there is some 2-cycle $c\in C_2(\Gamma)$ written
$$c=\sum_{i=1}^Nx_i[a_i|b_i].$$
so that
$$\langle \sigma,c\rangle\ne0.$$
Then we compute that
\begin{align*}
\openp\rho_n,c\closep&=\frac{1}{2\pi i}\sum_{j=1}^N x_j\Tr(\log(\rho_n(a_jb_j)\rho_n(b_j)^{-1}\rho_n(a_j)^{-1}))\\
&=\frac{1}{2\pi i}\sum_{j=1}^N x_j\Tr(\log(\chi_j(a_jb_j)^{-1})\Id_{V_n})&\mbox{by Lemma~\ref{chiform}}\\
&=-\frac{1}{2\pi i}\sum_{j=1}^N x_j\frac{2\pi i}{n}\sigma(a_j,b_j)\Tr(\Id_{V_n})\\
&=-\langle\sigma,c\rangle\frac{\dim(V_n)}{n}\\
&\ne0.
\end{align*}
By Theorem~\ref{obstruction} it follows that $\rho_n$ cannot be within $\frac{1}{24}$ of a genuine representation on the boundary support of $c$, and thus cannot be perturbed to a genuine representation.
\end{proof}
\begin{rmk}\label{finitelyGen}
If $\Gamma$ is finitely generated then pairing nontrivially with a cohomology class is equivalent to being non-torsion. To see this note that from the universal coefficient theorem \cite[Theorem~53.1]{eat}  we have a short exact sequence
$$\begin{tikzcd}
0\arrow{r}{}&\Ext(H_1(\Gamma),\Z)\arrow{r}{}& H^2(\Gamma;\Z)\arrow{r}{}&\Hom(H_2(\Gamma),\Z)\arrow{r}{}&0
\end{tikzcd}.$$
Clearly any torsion element cannot fit the condition since $\Hom(H_2(\Gamma),\Z)$ is a torsion-free group. To see the converse note that since $\Gamma$ is finitely generated $H_1(\Gamma)\cong\Gamma/[\Gamma,\Gamma]$ \cite[page 36]{coho} is finitely generated as well. Thus $\Ext(H_1(\Gamma),\Z))$ can be show to be torsion from \cite[Theorem 52.3]{eat} and the table on \cite{eat} page~331.
\end{rmk}

\begin{rmk}\label{simplify}
In many examples we will have the stronger condition that there is a particular cocycle representative $\sigma$ of $[\sigma]$ that is of finite type. In this case the formula simplifies to
$$\rho_n(g_1)\bar{g}_2=\chi_n(g_1,g_2)\bar{g}_1\bar{g}_2$$
where $\chi_n(g_1,g_2)=\exp\left(\frac{2\pi i}{n}\sigma(g_1,g_2)\right)$.
\end{rmk}

\begin{cor}\label{polycresult}
Suppose that $\Gamma$ is a virtually polycyclic group with non-torsion 2-cohomology. Then $\Gamma$ meets the conditions of Theorem~\ref{coc2}, and is thus not matricially stable.
\end{cor}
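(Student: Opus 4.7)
The plan is to apply Theorem~\ref{main} (equivalently Theorem~\ref{coc2} via Remark~\ref{rfinite}) to any non-torsion class $[\sigma]\in H^2(\Gamma;\Z)$, whose existence is part of the hypothesis. The two things to verify are that $[\sigma]$ pairs nontrivially with some 2-cycle, and that the associated central extension $\tilde{\Gamma}$ is residually finite.

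For the first verification, note that a virtually polycyclic group is finitely generated, since a finite-index polycyclic subgroup is generated by any polycyclic sequence and one need only add coset representatives. Remark~\ref{finitelyGen} then applies: the universal coefficient exact sequence forces the kernel of $h\colon H^2(\Gamma;\Z)\to\Hom(H_2(\Gamma),\Z)$ to be the image of $\Ext(H_1(\Gamma),\Z)$, which is torsion because $H_1(\Gamma)$ is finitely generated. Hence a non-torsion $[\sigma]$ automatically lies outside $\ker h$ and thus pairs nontrivially with some 2-cycle.

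For the second verification, $[\sigma]$ corresponds to a central extension
$$\begin{tikzcd} 1\arrow{r}&\Z\arrow{r}{\iota}&\tilde{\Gamma}\arrow{r}{\varphi}&\Gamma\arrow{r}&1,\end{tikzcd}$$
and Proposition~\ref{virtualPext} shows that $\tilde{\Gamma}$ is itself virtually polycyclic. Section~2.3 has already recorded, via Hirsch's theorem, that virtually polycyclic groups are residually finite; hence $\tilde{\Gamma}$ is residually finite, and Remark~\ref{rfinite} then guarantees that $[\sigma]$ is of finite type.

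With both hypotheses confirmed, Theorem~\ref{main} gives the conclusion, and the explicit formula of Proposition~\ref{rhon} supplies an asymptotic representation witnessing the failure of matricial stability. There is no genuine obstacle here; the corollary is essentially a bookkeeping combination of previous results, and the only point that requires a moment of thought is linking the non-torsion hypothesis to the nontrivial-pairing hypothesis of Theorem~\ref{coc2} via the finite generation of $\Gamma$.
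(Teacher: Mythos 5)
Your proof is correct and follows essentially the same route as the paper: invoke Remark~\ref{finitelyGen} (using that virtually polycyclic groups are finitely generated) to convert non-torsion into nontrivial pairing, then Proposition~\ref{virtualPext} and Hirsch's theorem to get residual finiteness of $\tilde{\Gamma}$, hence finite type. The only addition is your explicit note that virtually polycyclic implies finitely generated, which the paper leaves implicit when citing Remark~\ref{finitelyGen}.
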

\begin{proof}
First we note that if $[\sigma]$ is a non-torsion cohomology class it is not in the kernel of the map from $H^2(\Gamma;\Z)$ to $\Hom(H_2(\Gamma),\Z)$ by Remark~\ref{finitelyGen}. Let
\begin{equation*}
\begin{tikzcd}
e\arrow{r} & \Z
\arrow{r}{\iota} &
\Tilde{\Gamma} \arrow{r}{\varphi}
& \Gamma\arrow{r}{}& e
\end{tikzcd}
\end{equation*}
be the central extension corresponding to $[\sigma]$. We have that $\Tilde{\Gamma}$ is also virtually polycyclic by Proposition~\ref{virtualPext}. Now because $\tilde{\Gamma}$ is virtually polycyclic it is residually finite \cite[Theorem 3]{Hirsch}. Thus $[\sigma]$ is of finite type. 
\end{proof}

\section{Torsion-Free Finitely Generated Nilpotent Groups}

The purpose of this section is to provide an alternate proof that torsion-free finitely generated nilpotent groups fit the conditions of Theorem~\ref{coc2} (Theorem~\ref{nil}). While this follows from Corollary~\ref{polycresult} the alternate proof gives rise to a simple formula for the asymptotic representation.

\begin{prop}\label{mal}
Suppose that $\Gamma$ is a torsion-free finitely generated nilpotent group with a Mal'cev basis $(a_1,\ldots, a_m)$ and a central extension as follows:
$$e\begin{tikzcd}\arrow{r} & \Z
\arrow{r}{\iota} &
\Tilde{\Gamma} \arrow{r}{\varphi}
& \Gamma\arrow{r}{}& e.
\end{tikzcd}$$
Then if $\Tilde{a}_i$ is a lift of $a_i$ for $i\in\{1,\ldots,m\}$ and $\Tilde{a}_{m+1}=\iota(1)$, $(\Tilde{a}_1,\ldots,\Tilde{a}_{m+1})$ is a Mal'cev basis for $\Tilde{\Gamma}$. 
\end{prop}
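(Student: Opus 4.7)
The plan is to verify the two defining conditions of a Mal'cev basis in turn, using as the main tool the fact that $\varphi$ has kernel exactly $\iota(\Z) = \langle\tilde a_{m+1}\rangle$ and that this kernel is central.

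For the canonical form condition, given $\tilde g \in \tilde\Gamma$, I would first push down to $\Gamma$: write $\varphi(\tilde g) = a_1^{x_1}\cdots a_m^{x_m}$ in the canonical form afforded by the Mal'cev basis of $\Gamma$. Then $\tilde g \cdot (\tilde a_1^{x_1}\cdots\tilde a_m^{x_m})^{-1}$ lies in $\ker\varphi = \iota(\Z)$, hence equals $\tilde a_{m+1}^{x_{m+1}}$ for some $x_{m+1}\in\Z$. Because $\tilde a_{m+1}$ is central in $\tilde\Gamma$, we may commute it to the right, giving $\tilde g = \tilde a_1^{x_1}\cdots\tilde a_m^{x_m}\tilde a_{m+1}^{x_{m+1}}$. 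For uniqueness, suppose two such expressions agree; apply $\varphi$ and use that $\varphi(\tilde a_{m+1}) = 1$, so the first $m$ exponents coincide by uniqueness of the canonical form in $\Gamma$. Cancelling, we are left with $\tilde a_{m+1}^{x_{m+1}} = \tilde a_{m+1}^{y_{m+1}}$, and since $\iota$ is injective $\tilde a_{m+1}$ has infinite order, so $x_{m+1} = y_{m+1}$.

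For the central series condition, let $\tilde\Gamma_i = \langle \tilde a_i,\ldots,\tilde a_{m+1}\rangle$ (with $\tilde\Gamma_{m+2} := \{1\}$). The key technical step is to identify $\tilde\Gamma_i = \varphi^{-1}(\Gamma_i)$ for $1\le i \le m+1$, where $\Gamma_{m+1} := \{1\}$. The inclusion $\tilde\Gamma_i \subseteq \varphi^{-1}(\Gamma_i)$ is immediate. Conversely, if $\varphi(g) \in \Gamma_i$, write $\varphi(g) = a_i^{x_i}\cdots a_m^{x_m}$, so that $g \cdot (\tilde a_i^{x_i}\cdots\tilde a_m^{x_m})^{-1} \in \ker\varphi = \langle \tilde a_{m+1}\rangle \subseteq \tilde\Gamma_i$, forcing $g \in \tilde\Gamma_i$. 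Normality of $\tilde\Gamma_i$ in $\tilde\Gamma$ is then automatic as the preimage of the normal subgroup $\Gamma_i$.

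It remains to verify that $\tilde\Gamma_i/\tilde\Gamma_{i+1}$ is central in $\tilde\Gamma/\tilde\Gamma_{i+1}$. For $i \le m$, given $g \in \tilde\Gamma_i$ and $h \in \tilde\Gamma$, the commutator $[g,h]$ satisfies $\varphi([g,h]) = [\varphi(g),\varphi(h)] \in \Gamma_{i+1}$ by the central series property for $\Gamma$, so $[g,h] \in \varphi^{-1}(\Gamma_{i+1}) = \tilde\Gamma_{i+1}$. For $i = m+1$, we have $\tilde\Gamma_{m+1} = \iota(\Z)$, which is central in $\tilde\Gamma$ by hypothesis, so $[g,h] = 1 \in \tilde\Gamma_{m+2}$ trivially. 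This completes the verification; the only step requiring any care is the identification $\tilde\Gamma_i = \varphi^{-1}(\Gamma_i)$, and even that is a short exercise once one observes that $\ker\varphi$ is already contained in every $\tilde\Gamma_i$.
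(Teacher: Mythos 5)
Your proof is correct and follows essentially the same route as the paper's: establish existence and uniqueness of the canonical form by pushing to $\Gamma$ and using centrality and torsion-freeness of $\iota(1)$, then verify the central series condition for $\tilde\Gamma_i=\langle\tilde a_i,\ldots,\tilde a_{m+1}\rangle$ by comparing with $\Gamma_i$ through $\varphi$. The only cosmetic difference is that you explicitly isolate the identification $\tilde\Gamma_i=\varphi^{-1}(\Gamma_i)$ as a lemma, whereas the paper folds that fact into a single chain of inclusions ending with $\varphi^{-1}(\Gamma_{i+1})\subseteq\langle\tilde a_{m+1},\tilde\Gamma_{i+1}\rangle=\tilde\Gamma_{i+1}$; the underlying reasoning is the same.
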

\begin{proof}
Let $\theta$ be the set theoretic section for $\varphi$ defined by $\theta(a_1^{x_1}\cdots a_m^{x_m})=\Tilde{a}_1^{x_1}\cdots\Tilde{a}_m^{x_m}$. First we claim that any element $g\in\Tilde{\Gamma}$ can be written in the form $\Tilde{a}_1^{x_1}\cdots\Tilde{a}_m^{x_m}\cdot\Tilde{a}_{m+1}^{x_{m+1}}$. To see this note that $g=(\theta\circ\varphi(g))\Tilde{a}_{m+1}^{x_{m+1}}$. Then the claim follows from the definition of~$\theta$. Next to show that this is unique we suppose that $\Tilde{a}_1^{x_1}\cdots \Tilde{a}_{m+1}^{x_{m+1}}=\Tilde{a}_1^{y_1}\cdots\Tilde{a}_{m+1}^{y_{m+1}}$. Noting that
$$a_1^{x_1}\cdots a_m^{x_m}=\varphi(\Tilde{a}_1^{x_1}\cdots \Tilde{a}_{m+1}^{x_{m+1}})=\varphi(\Tilde{a}_1^{y_1}\cdots \Tilde{a}_{m+1}^{y_{m+1}})=a_1^{y_1}\cdots a_m^{y_m}$$
we get that $x_i=y_i$ for $i\ne m+1$. Then equality for $i=m+1$ follows canceling the other terms and noting that $\Tilde{a}_{m+1}$ is non torsion. Next we define $\Tilde{\Gamma}_i=\langle \Tilde a_i,\ldots, \Tilde a_{m+1}\rangle$ and similarly $\Gamma_i=\langle a_i,\ldots ,a_m\rangle$. By our assumptions $\Gamma_i$ form a central series for $\Gamma$. Note that for $i\le m$
\begin{align*}
[\Tilde{\Gamma},\Tilde{\Gamma}_i]&\subseteq\varphi^{-1}(\varphi([\Tilde{\Gamma},\Tilde{\Gamma}_i]))\\
&\subseteq\varphi^{-1}([\Gamma,\Gamma_i])\\
&\subseteq\varphi^{-1}(\Gamma_{i+1})\\
&\subseteq\langle\Tilde{a}_{m+1},\Tilde{\Gamma}_{i+1}\rangle\\
&=\Tilde{\Gamma}_{i+1}.
\end{align*}
For $i=m+1$ we have $[\Tilde{\Gamma},\Tilde{\Gamma}_{m+1}]=\{e\}=\Tilde{\Gamma}_{m+2}$. This shows that $\Tilde{\Gamma}_i$ is a central series which completes our proof.
\end{proof}

\begin{thm}\label{nil}
Suppose that $\Gamma$ is a torsion-free finitely generated nilpotent group that is not $\Z$ or trivial. Then $\Gamma$ has a cohomology class that meets the conditions of Theorem~\ref{coc2}, and the asympotic representation can be expressed as follows: $\Gamma$ can be viewed as $\Z^m$ with the multiplication as follows
$$\mathbf{x}*\mathbf{y}=(\eta_1(\mathbf{x,y}),\ldots,\eta_m(\mathbf{x,y}))$$
where $\eta_1,\ldots,\eta_n$ are rational\footnote{The polynomials will always take integer values if given integer inputs, but in general the coefficients may not be integers.}  polynomials in $\mathbf{x}=(x_1,\ldots, x_m)$ and $\mathbf{y}=(y_1,\ldots,y_m)$. In addition we have a non-torsion cocycle $\sigma(\mathbf{x,y})$ that is also a rational polynomial in the entries. Then the underlying vector space is $(\C^n)^{\tensor m}$. Then for $n$ co-prime to the denominators of coefficient in the $\eta_i$ and $\sigma$ we have
$$\rho_n(\mathbf{x})e_{y_1}\tensor e_{y_2}\tensor\cdots\tensor e_{y_m}=\exp\left(\frac{2\pi i}{n}\sigma(\mathbf{x,y})\right)e_{\eta_1(\mathbf{x,y})}\tensor\cdots\tensor e_{\eta_m(\mathbf{x,y})}$$
where we have the convention that $e_{j+n}=e_j$.
\end{thm}
\begin{proof}
By Corollary~\ref{nilpairing} there is a cohomology class $[\sigma]\in H^2(\Gamma;\Z)$ which pairs nontrivially with a homology class. By \cite[Theorem IV.3.12]{coho} we have a central extension corresponding to $[\sigma]$:
$$\begin{tikzcd}
e\arrow{r} & \Z
\arrow{r}{} &
\Tilde{\Gamma} \arrow{r}{\varphi}
& \Gamma\arrow{r}{}& e.
\end{tikzcd}$$
By Proposition~\ref{mal} we have a Mal'cev basis $(\Tilde{a}_1,\ldots,\Tilde{a}_{m+1})$ for $\tilde\Gamma$ and a Mal'cev basis $(a_i,\ldots, a_m)$ for $\Gamma$ where for $i\ne m+1$ we have $\varphi(\tilde{a}_i)=a_i$. Any element in~$\Tilde{\Gamma}$ can be written uniquely as $\Tilde a_1^{x_1}\cdots \tilde a_{m+1}^{x_{m+1}}$. It follows that
$$\Tilde a_1^{x_1}\cdots \Tilde a_{m+1}^{x_{m+1}}\cdot \tilde a_1^{y_1}\cdots  \tilde a_{m+1}^{y_{m+1}}=\tilde a_1^{z_1}\cdots \tilde{a}_{m+1}^{z_{m+1}}$$
where $z_i$ is a function of $x_1,\ldots, x_{m+1},y_1,\ldots y_{m+1}$. Hall has shown that these functions are rational polynomials \cite[Theorem 6.5]{hall}. They can be computed by methods here \cite{deep}. Then if we pick a section $\theta:a_1^{x_1}\cdots a_m^{x_m}\mapsto\tilde{a}_1^{x_1}\cdots\tilde{a}_m^{x_m}$ by \cite[Chapter IV.3 equation 3.3.]{coho} there is a cocycle $\sigma'$ with $[\sigma]=[\sigma']$ so that
$$\theta(g)\theta(h)=\theta(gh)\tilde{a}_{m+1}^{\sigma'(g,h)}.$$
Since $\theta(gh)$ is in canonical form and has no power of $\tilde a_{m+1}$ it follows that $\sigma'(g,h)$ must be a rational polynomial of the powers in the canonical forms of $g$ and~$h$. Note that by writing elements of $\Gamma$ in canonical form we get that $\Gamma$ can be viewed as $\Z^m$ with a multiplication given by polynomial formulas of the entries. Thus if we take $n$ to be co-prime to the denominator of each polynomial in the multiplication for $\Gamma$ and the denominator of $\sigma'$ we may define a quotient $Q_n$ of~$\Gamma$ by reducing each entry of $\Gamma$ mod $n$. Then we may also reduce the formula for $\sigma'$ mod $n$ showing that $\sigma'$ is of $n$-$Q_n$ type. To justify the formula note that $\ell^2(Q_n)\cong(\C^n)^{\otimes n}$ and the isomorphism sends $(y_1,\ldots, y_m)\mapsto e_{y_1}\tensor\cdots \tensor e_{y_m}$. The applying Proposition~\ref{rhon} we get the formula mentioned here, with $\sigma_n=\sigma$, and so $\alpha_n=0$.
\end{proof}

\begin{rmk}\label{dimReduce}
Additionally we may modify this construction to get an asymptotic representation of dimension $n^{m-1}$ instead of dimension $n^m$. Note that the power of $\tilde a_{m+1}$ in the product $(\tilde a_1^{x_1}\cdots \tilde a_m^{x_m})(\tilde a_1^{y_1}\cdots\tilde a_m^{y_m})$ can be computed by using the relations to put all terms in order. If we leave the~$\tilde a_m^{y_m}$ term at the end until the last step, we notice that we may have to switch the positions of the $\tilde a_m$ and $\tilde a_{m+1}$ but these commute by because the extension is central. From this it follows that $\sigma$ does not depend on $y_m$. Thus we may replace our quotient $Q_n$ with $Q_n'=Q_n/\langle q_n(a_m)\rangle$ and by Remark~\ref{2ndEntry} we may use the formula for $\rho_n$ except with $Q_n'$ instead of $Q_n$. The rest of the proof for asymptotic multiplicativity and non-perturbability flows the same way. An alternative explanation is that because $\sigma(\mathbf{x,y})$ does not depend on $y_m$ the formula for $\rho_n$ commutes with the projection $r_n=\Id_{\C^n}\tensor\cdots\tensor\Id_{\C^n}\tensor p_n$ where $p_n$ is defined by the formula $p_ne_i=\sum_{j}\frac{1}{\sqrt{n}}e_j$. Thus $r_n\rho_nr_n$ defines an $n^{m-1}$-dimensional asymptotic representation.
\end{rmk}

\section{Generators and Relations}

In this section we will show how to recognize conditions of Theorem~\ref{coc2} by looking at a presentation of a group. This can be helpful for computing the cocycle. It also relates our method to the method of other authors such as~\cite{voiculescuM}, \cite{KvoiculescuM}, and~\cite{stab}. 

Note that there must be some cocycle that satisfies the first condition in Theorem~\ref{coc2} if the 2-homology group is non-torsion. Note that because of Theorem~\ref{hopf} it is possible to view the 2-homology as a group of relations, so it should be possible to test this condition by looking at relations of the group. 

Let $\Gamma=\langle g_1,\ldots, g_m|r_1,\ldots\rangle$. By convention we will let $\mathbb{F}_m=\langle\gamma_1,\ldots,\gamma_m\rangle$. 

\begin{defn}
We say the relation $r_1$ is {\em homogeneous} if $r_1\in[\F_m,\F_m]$. We say that {\em powers of} $r_1$ {\em are centrally irredundant relative to $r_2,\ldots$} if $r_1^n$ is not in the normal subgroup of $\mathbb{F}_m$ generated by $r_2,\ldots$ and $ [\gamma_1,r_1],\ldots,[\gamma_m,r_1]$ for any $n\in\Z^+$.
\end{defn}

\begin{ex}\label{z2ext}
If $\Z^2$ is written as $\langle a,b|aba^{-1}b^{-1}=1\rangle$ then the relation $aba^{-1}b^{-1}$ is both homogeneous and powers of it are centrally irredundant. 
\end{ex}

Note that if $r_1$ meets both these conditions then it corresponds to a non torsion element of $H_2(\Gamma,\Z)$ by Theorem~\ref{hopf}. 

\begin{prop}\label{extension}
If $\Gamma=\langle g_1,\ldots, g_m|r_1,\ldots\rangle$ where $r_1$ is homogeneous and powers of~$r_1$ are centrally irredundant relative to the other relations then there is a central extension
\begin{equation*}
\begin{tikzcd}
1\arrow{r} & \Z
\arrow{r}{\iota} &
\Tilde{\Gamma} \arrow{r}{\varphi}
& \Gamma\arrow{r}{}& 1
\end{tikzcd}
\end{equation*}
by taking 
$$\Tilde{\Gamma}=\langle \Tilde{g}_1,\ldots, \Tilde{g}_m|[\Tilde{g}_1,r_1],\ldots,[\Tilde{g}_m,r_1], r_2,r_3,\ldots\rangle$$
and mapping $\iota:1\mapsto \Tilde{r}_1\in\Tilde{\Gamma}$ and $\varphi:\Tilde{g}_i\mapsto g_i$ where $\Tilde{r}_1$ is the image of $r_1$ in the map from $\F_m$ to $\Tilde{\Gamma}$ defined by mapping $\gamma_i\mapsto\Tilde{g}_i$.
\end{prop}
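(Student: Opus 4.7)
The plan is to verify the four pieces of data needed to exhibit a central extension: (a) $\iota$ is a well-defined homomorphism whose image is central in $\tilde\Gamma$; (b) $\iota$ is injective; (c) $\varphi$ is a well-defined surjection; (d) $\ker\varphi=\iota(\Z)$. Of these, (a), (c) and (d) are essentially formal consequences of the presentation, and the whole content of the hypothesis sits in step (b).

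First I would dispose of (a). The relations $[\tilde g_i,r_1]$ say exactly that $\tilde r_1$ commutes with every generator of $\tilde\Gamma$, hence with every element; so $\iota:\Z\to\tilde\Gamma$ defined by $1\mapsto\tilde r_1$ is a well-defined homomorphism (since $\Z$ is free of rank one) with central image. For (c), $\varphi:\tilde\Gamma\to\Gamma$ sending $\tilde g_i\mapsto g_i$ is surjective because the $g_i$ generate $\Gamma$, and well-defined because each $r_j$ with $j\geq 2$ is already a relation of $\Gamma$, while each $[\tilde g_i,r_1]$ maps to $[g_i,r_1]=[g_i,1]=1$ in $\Gamma$.

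For (d) I would compute $\ker\varphi$ using the standard interpretation of presentations. Writing $\Gamma=\F_m/N$ with $N$ the normal closure of $\{r_1,r_2,\ldots\}$ and $\tilde\Gamma=\F_m/M$ with $M$ the normal closure of $\{[\gamma_i,r_1]\}_{i=1}^m\cup\{r_2,r_3,\ldots\}$, one has $[\gamma_i,r_1]\in N$, so $M\subseteq N$ and $\ker\varphi=N/M$. Any element of $N$ is a product of conjugates of the $r_j^{\pm 1}$; modulo $M$ all $r_j$ with $j\geq 2$ vanish, and by centrality of $\tilde r_1$ every conjugate of $r_1$ in $\tilde\Gamma$ equals $\tilde r_1$ itself. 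Therefore $N/M=\langle\tilde r_1\rangle=\iota(\Z)$, provided $\iota$ is injective.

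The main obstacle, and really the only place a hypothesis is invoked, is (b). Injectivity of $\iota$ says $\tilde r_1$ has infinite order in $\tilde\Gamma$, i.e.\ $r_1^n\notin M$ for every $n\in\Z^+$, which is precisely the statement that powers of $r_1$ are centrally irredundant relative to $r_2,r_3,\ldots$. So the hypothesis is exactly tailored to guarantee (b). The homogeneity assumption on $r_1$ is not actually needed for the present proposition in isolation; it plays its role elsewhere, ensuring via Theorem~\ref{hopf} that the relation $r_1$ gives rise to a non-trivial element of $H_2(\Gamma;\Z)$ that can be paired nontrivially with the cocycle arising from the central extension constructed here.
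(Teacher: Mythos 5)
Your proposal is correct and follows essentially the same line as the paper's own proof: both verify the same four facts (well-definedness and centrality of $\iota$, well-definedness and surjectivity of $\varphi$, exactness at $\tilde\Gamma$ by reducing $\ker\varphi$ modulo the vanishing relations and using centrality to collapse all conjugates of $r_1$ to $\tilde r_1$, and injectivity of $\iota$ from central irredundancy), in the same order of difficulty. Your side remark that homogeneity is not used in this proposition but only later via Theorem~\ref{hopf} is accurate and a fair observation about how the hypotheses are divided between Proposition~\ref{extension} and the subsequent results.
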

\begin{proof}
The map $\iota$ is injective because powers of $r_1$ are centrally irredundant. 

Because $r_1$ is one of the defining relations of $\Gamma$, we have $\im(\iota)\subseteq\ker(\varphi)$.

Note that $\im\iota$ is central in $\Tilde{\Gamma}$ by the construction of $\Tilde{\Gamma}$.

The map $\varphi$ is well defined because every relationship defining $\Tilde{\Gamma}$ is true of the generators of $\Gamma$ as well.

If $w$ is some word in the generators that is in $\ker(\varphi)$ then $w$ pulls back to some word in $\F_m$ that is in the smallest normal subgroup containing $r_1,\ldots$. Since every other relation is trivial in $\Tilde{\Gamma}$, this means $w$ is in the smallest normal subgroup containing $\tilde{r}_1$. However since $\tilde{r}_1$ is central, this group is precisely $\im(\iota)$.

The map $\varphi$ is surjective because it hits every generator of $\Gamma$.
\end{proof}

\begin{defn}
We call the central extension in Proposition~\ref{extension} the {\em dual extension of $r_1$ relative to the other relations}.
\end{defn}

\begin{ex}
If $\Z^2\cong\langle a,b|[a,b]=1\rangle$ is given the same presentation as in Example~\ref{z2ext} then the dual extension to the relation $aba^{-1}b^{-1}$ can be expressed as the group
$$H=\langle a,b,c|c=[a,b],[a,c]=[b,c]=1\rangle$$
commonly known as the discrete Heisenberg group. The map $\iota$ is given by $\iota(1)=c$ and $\varphi$ is determined by $\varphi(a)=a$ and $\varphi(b)=b$
\end{ex}
 
\begin{prop}\label{same}
Suppose that $r_1$ is a homogeneous relation on a group $\Gamma=\langle g_1,\ldots, g_m|r_1,\ldots\rangle$ so that powers of $r_1$ are centrally irredundant relative to the other relations. If~$[\sigma]$ is the 2-cohomology class corresponding to the dual extension of $r_1$ and $[c]$ is the 2-homology class corresponding to $r_1$ then $\langle\sigma,c\rangle=1$.
\end{prop}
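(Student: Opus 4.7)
The plan is to compute $\langle\sigma,c\rangle$ by lifting the relation $r_1$ to $\tilde\Gamma$ in two different ways and comparing. Since $r_1$ is homogeneous, write $r_1 = \prod_{i=1}^N[a_i,b_i]$ in $\F_m$, and let $\dot a_i,\dot b_i\in\Gamma$ be the images; then $c$ is given by the explicit Hopf formula recalled after Theorem~\ref{hopf}. Choose a set-theoretic section $\theta:\Gamma\to\tilde\Gamma$ with $\theta(1)=1$, and define the normalized cocycle representative $\sigma$ of $[\sigma]$ by $\iota(\sigma(g,h)) = \theta(g)\theta(h)\theta(gh)^{-1}$.

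For the first lift, the assignment $\gamma_i\mapsto\tilde g_i$ defines a group homomorphism $\tilde\theta:\F_m\to\tilde\Gamma$, and by the construction of the dual extension $\tilde\theta(r_1)=\tilde r_1=\iota(1)$. Because $\iota(\Z)$ is central in $\tilde\Gamma$, commutators are unaffected by multiplying either entry by a central element, so $[\tilde\theta(a_i),\tilde\theta(b_i)]=[\theta(\dot a_i),\theta(\dot b_i)]$ for every $i$, giving
$$\prod_{i=1}^N[\theta(\dot a_i),\theta(\dot b_i)] \;=\; \tilde\theta(r_1) \;=\; \iota(1).$$

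For the second lift, the main computational step is the identity
$$\prod_{i=1}^N[\theta(\dot a_i),\theta(\dot b_i)] \;=\; \iota(\langle\sigma,c\rangle)\cdot\theta(I_N),$$
which I would prove by induction on $N$. The inductive step rewrites $\theta(I_{j-1})\cdot[\theta(\dot a_j),\theta(\dot b_j)]$ by applying the cocycle relation $\theta(x)\theta(y)=\iota(\sigma(x,y))\theta(xy)$ four times to successively move the letters $\dot a_j,\dot b_j,\dot a_j^{-1},\dot b_j^{-1}$ inside $\theta$; the four integer contributions that appear are exactly $\sigma(I_{j-1},\dot a_j)$, $\sigma(I_{j-1}\dot a_j,\dot b_j)$, $-\sigma(I_{j-1}\dot a_j\dot b_j\dot a_j^{-1},\dot a_j)$, and $-\sigma(I_j,\dot b_j)$, matching the $j$-th block of the Hopf formula for $c$ term by term. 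Since $r_1=1$ in $\Gamma$, $I_N=1$ and $\theta(I_N)=1$; comparing with the first display yields $\iota(1)=\iota(\langle\sigma,c\rangle)$, and $\iota$ is injective by the centrally-irredundant hypothesis that went into the construction of the dual extension, so $\langle\sigma,c\rangle=1$.

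The only genuine obstacle is the bookkeeping in the inductive step: one must keep careful track of the order of multiplication so that the four sign-bearing cocycle contributions produced by passing $[\theta(\dot a_j),\theta(\dot b_j)]$ across $\theta(I_{j-1})$ line up exactly with the four summands of the $j$-th block of $c$ and so that the residual group element is correctly absorbed into $\theta(I_j)$. Everything else is formal.
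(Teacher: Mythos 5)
Your proposal is correct and takes essentially the same approach as the paper's proof: both arguments reduce to the identity $\prod_{i=1}^N[\theta(\dot a_i),\theta(\dot b_i)]=\iota(\langle\sigma,c\rangle)\,\theta(I_N)$, proved by tracking the four cocycle contributions per commutator block, and both compare this to the fact that $\prod_i[\theta(\dot a_i),\theta(\dot b_i)]=\tilde\theta(r_1)=\iota(1)$ because the section differs from the homomorphism $\tilde\theta$ only by central factors. The paper organizes the block computation as a telescoping product of the scalars $\tilde r_1^{-\langle\sigma,c_i\rangle}=\theta(I_i)[\tilde b_i,\tilde a_i]\theta(I_{i-1})^{-1}$ rather than as an induction, but the underlying cocycle bookkeeping is identical.
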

\begin{proof}
Pick a section $\theta$ of the extension so that $\theta(1)=1$. Then $\sigma$ is defined by the formula
\[\Tilde{r}_1^{-\sigma(a,b)}=\theta(ab)\theta(b)^{-1}\theta(a)^{-1}.\tag{see \cite[Chapter IV.3 equation 3.3.]{coho}}\]
To compute the 2-homology class corresponding to $r_1$ we need write $r_1$ as
$$r_1=\prod_{i=1}^N[{a}_i,{b}_i]$$
then by \cite[chapter II.5 Exercise 4]{coho} the class corresponding to $r_1$ can be written as
\[c=\sum_{i=1}^N([I_{i-1}|\dot a_i]+[I_{i-1}\dot a_i|\dot b_i]-[I_{i-1}\dot a_i\dot b_i\dot a_i^{-1}|\dot a_i]-[I_i|\dot b_i])\tag{5.1}\]
where $\dot a_i$ and $\dot b_i$ are the images in $\Gamma$ of ${a}_i$ and ${b}_i$ and $I_i=[\dot a_1,\dot b_1]\cdots[\dot a_i,\dot b_i]$. Call $\Tilde{a}_i=\theta(\dot a_i)$ and $\Tilde{b}_i=\theta(\dot b_i)$. Here $\Tilde{a}_i$ and $\Tilde{b}_i$ must be the image of ${a}_i$ and ${b}_i$ in the map from $\F_m$ to $\Tilde{\Gamma}$ times some power of $\Tilde{r}_1$. Since $\Tilde{r}_1$ is central in $\Tilde{\Gamma}$ this power can be ignored while computing $[\Tilde{a}_i,\Tilde{b}_i]$. Thus
$$\prod_{i=1}^N[\Tilde{a}_i,\Tilde{b}_i]=\Tilde{r}_1.$$
Note that
$$I_{i-1}\dot{a}_i\dot b_i\dot a_i^{-1}=I_{i-1}\dot a_i\dot b_i\dot a_i^{-1}\dot b_i^{-1}b_i=I_i\dot b_i.$$
Let $c_i$ be the $i$th term in the sum of equation~(5.1). We get
\begin{align*}
\Tilde{r}_1^{-\langle\sigma,c_i\rangle}=&\theta(I_i)\theta(\dot b_i)\theta(I_i\dot b_i)^{-1}\\
&\cdot\theta(I_{i-1}\dot a_i\dot b_i\dot a_i^{-1})\theta(\dot a_i)\theta(I_{i-1}\dot a_i\dot b_i)^{-1}\\
&\cdot\theta(I_{i-1}\dot a_i\dot b_i)\theta(\dot b_i)^{-1}\theta(I_{i-1}\dot a_i)^{-1}\\
&\cdot\theta(I_{i-1}\dot a_i)\theta(\dot a_i)^{-1}\theta(I_{i-1})^{-1}\\
=&\theta(I_i)[\Tilde{b}_i,\Tilde{a}_i]\theta(I_{i-1})^{-1}.
\end{align*}
Then multiplying these in reverse order, starting from $i=N$ and ending at $i=1$ we get
\begin{align*}
\Tilde{r}_1^{-\langle\sigma,c\rangle}&=\theta(I_N)\left(\prod_{i=0}^{N-1}[\Tilde{b}_{N-i},\Tilde{a}_{N-i}]\right)\theta(I_0)^{-1}\\
&=\Tilde{r}_1^{-1}.
\end{align*}
Here the last equality is because $I_0=I_N=1$ since $I_N$ is the relation $r_1$.
\end{proof}

\begin{thm}\label{grmain}
Suppose that $r_1$ is a homogeneous relation on a group $\Gamma=\langle g_1,\ldots, g_m|r_1,\ldots\rangle$ so that powers of $r_1$ are centrally irredundant relative to the other relations. If 
\begin{equation*}
\begin{tikzcd}
e\arrow{r} & \Z
\arrow{r}{\iota} &
\Tilde{\Gamma} \arrow{r}{\varphi}
& \Gamma\arrow{r}{}& e
\end{tikzcd}
\end{equation*}
is the dual extension of $r_1$ relative to this presentation and there is a family of finite index subgroups $\Gamma_k\le\tilde\Gamma$ so that $\iota(\Z)\cap\bigcap_{k}\Gamma_k=\{1\}$. Then $\Gamma$ meets the conditions of Theorem~\ref{coc2}.
\end{thm}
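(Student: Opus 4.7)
The plan is to take $[\sigma] \in H^2(\Gamma;\Z)$ to be the cohomology class corresponding to the dual extension of $r_1$ supplied by Proposition~\ref{extension}, and then verify directly that $[\sigma]$ satisfies the two hypotheses of Theorem~\ref{coc2}: that it is of finite type, and that it is not in the kernel of the map $h:H^2(\Gamma;\Z)\to\Hom(H_2(\Gamma;\Z),\Z)$.

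The finite type condition is essentially given for free by the assumption. By construction, the dual extension realises the central extension
\begin{equation*}
1\to\Z\xrightarrow{\iota}\tilde\Gamma\xrightarrow{\varphi}\Gamma\to 1
\end{equation*}
classified by $[\sigma]$, so the family $\{\Gamma_k\}$ of finite index subgroups of $\tilde\Gamma$ with $\iota(\Z)\cap\bigcap_k\Gamma_k=\{1\}$ is exactly the data demanded by Definition~\ref{finiteT}. Hence $[\sigma]$ is of finite type with no further work.

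For the second condition I would apply Proposition~\ref{same}: since $r_1$ is homogeneous, writing it as a product of commutators in $\F_m$ and pushing through the Hopf formula (Theorem~\ref{hopf}) produces an explicit 2-chain $c\in C_2(\Gamma)$, and Proposition~\ref{same} asserts $\langle\sigma,c\rangle=1$. Because the pairing $C^2(\Gamma)\times C_2(\Gamma)\to\Z$ descends to a well-defined bilinear map on (co)homology (shown earlier), the identity $\langle\sigma,c\rangle\ne 0$ forces both that $c$ represents a genuinely nonzero class in $H_2(\Gamma;\Z)$ (which is also where the centrally irredundant hypothesis is doing its work, guaranteeing that $c$ is not a boundary) and that $h([\sigma])$ is nonzero on this class. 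Thus $[\sigma]\notin\ker h$.

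Having verified both hypotheses, Theorem~\ref{coc2} then produces an asymptotic representation that cannot be perturbed to a genuine one, giving the conclusion. The proof is pure bookkeeping — the substantive content has been absorbed into Proposition~\ref{extension}, Proposition~\ref{same}, and Theorem~\ref{coc2}. The only thing to watch is simply aligning the definitions: that the given family $\{\Gamma_k\}$ witnesses finite type for precisely the cohomology class of the dual extension, and that the cycle $c$ produced from $r_1$ indeed pairs with $\sigma$ to give $1$, not merely a nonzero integer — but Proposition~\ref{same} supplies exactly this.
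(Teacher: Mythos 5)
Your proposal is correct and follows the same route as the paper, whose entire proof of Theorem~\ref{grmain} is ``This follows immediately from Proposition~\ref{same}.'' You have simply made explicit the bookkeeping the paper leaves implicit: the hypothesized family $\{\Gamma_k\}$ directly witnesses Definition~\ref{finiteT} for the class $[\sigma]$ of the dual extension, and Proposition~\ref{same} gives $\langle\sigma,c\rangle=1\ne0$, so $[\sigma]\notin\ker h$.
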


\begin{proof}
This follows immediately from Proposition~\ref{same}.
\end{proof}

\section{Motivating Our Formula}

The purpose of this section is to explain the motivation behind the formula in Proposition~\ref{rhon}. For clarity recall the statement of the proposition:
\medskip

{\em Suppose that $\Gamma$ is a discrete group and $[\sigma]\in H^2(\Gamma;\Z)$. Let $Q_n$ be a finite quotient of $\Gamma$ so that $[\sigma]$ is of $n$-$Q_n$ type. Let $\sigma_n$ be a representative of $[\sigma]$ so that $\sigma_n$ is of $n$-$Q_n$ type. Let $\alpha_n$ be a 1-cochain so that $\sigma_n+\partial\alpha_n=\sigma$. Let $\hat{\chi}_n(g_1,g_2)=\exp\left(\frac{2\pi i}{n}(\alpha_n(g_1)+\sigma_n(g_1,g_2))\right)$. Then define $V_n=\ell^2(Q_n)$. Treat $\bar{g}$ as a basis element for $V_n$ where $g\in\Gamma$ and $\bar{g}$ is its image in $Q_n$. Then there is a well-defined function $\rho_n:\Gamma\rightarrow U(V_n)$ that obeys the formula}
$$\rho_n(g_1)\Bar{g}_2=\hat{\chi}_n(g_1,g_2)\Bar{g}_1\Bar{g}_2.$$

In terms of central extensions this means we have the following

\begin{equation*}\begin{tikzcd}
e\arrow{r} & \Z
\arrow{r}{\iota}\arrow{d}{f^n} &
\Tilde{\Gamma}\arrow{d}{\Tilde{q}_n} \arrow[shift left]{r}{\varphi}
& \Gamma\arrow{r}{}\arrow{d}{q_n}\arrow[shift left]{l}{\theta}& e
\\
e\arrow{r} & \Z/n\Z
\arrow{r}{\iota'} &
\Tilde{Q}_n \arrow[shift left]{r}{\varphi_n'}
& Q_n\arrow{r}{}\arrow[shift left]{l}{\theta_n'}& e
\end{tikzcd}\tag{6.1}\end{equation*}
Here we are taking $\theta$ to be the section from $\Gamma$ to $\tilde{\Gamma}$ corresponding to the cocycle~$\sigma$ and $\theta_n'$ to be a section from~$Q_n$ to $\tilde Q_n$. Neither is a group homomorphism. Consider the character on $\Z/n\Z$ defined by $1\mapsto\exp\left(\frac{2\pi i}{n}\right)$. Then if $\pi_n$ is the representation induced by this character and the inclusion $\iota'$ we will show that $\rho_n=\pi_n\circ\tilde q_n\circ \theta$. We may view this induced representation as the left regular representation of $Q_n$ ``twisted'' by the cocycle $\sigma_n$ which corresponds to a $\Z/n\Z$ cocycle on $Q_n$ because $\sigma_n$ is of $n$-$Q_n$ type. The $\alpha_n$ in the formula is there because the right to left maps in the diagram may not commute with the horizontal maps. We cannot always pick $\theta$ to commute with the horizontal maps in this way, because if we did~$\theta$ would need to depend on $n$ and our argument for asymptotic multiplicativity would not hold. The following theorem summarizes this result.

\begin{thm}\label{cext2}
Suppose that $\Gamma$ meets the conditions of Theorem~\ref{coc2}. Then our formula for an asymptotic representation that cannot be perturbed to a genuine representation can be obtained as follows. Consider the central extension corresponding to $[\sigma]$:
\begin{equation*}
\begin{tikzcd}
1\arrow{r} & \Z
\arrow{r}{\iota} &
\Tilde{\Gamma} \arrow{r}{\varphi}
& \Gamma\arrow{r}{}& 1.
\end{tikzcd}
\end{equation*}
Let $S$ be the set of $n\in\N$ so that $\tilde\Gamma$ has a finite quotient~$\tilde Q_n$ where $\iota(1)$ has order~$n$. Henceforth assume $n\in S$. Let~$\theta$ be a set theoretic section from $\Gamma$ to $\Tilde{\Gamma}$, and let $\tilde{q}_n$ be the quotient map from $\Tilde{\Gamma}$ to $\Tilde{Q}_n$. If~$\pi_n$ is the induced representation on $\Tilde{Q}_n$ by the character on $\langle \tilde{q}_n(\iota(1))\rangle$ defined by $\tilde{q}_n((\iota(1))\mapsto\exp\left(\frac{2\pi i}{n}\right)$. Then $\rho_n=\pi_n\circ \tilde q_n\circ\theta$ equivalent to the formula given in Proposition~\ref{rhon}. 
\end{thm}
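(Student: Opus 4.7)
The plan is to unpack what the induced representation $\pi_n$ looks like concretely on a chosen basis, then compose with $\tilde q_n\circ\theta$ and verify term by term that the result agrees with the formula from Proposition~\ref{rhon}. Since $\iota'(\Z/n\Z)$ is central in $\tilde Q_n$, the quotient $\tilde Q_n/\iota'(\Z/n\Z)$ is naturally identified with $Q_n$ via $\varphi'_n$, so after choosing any set-theoretic section $\theta'_n:Q_n\to\tilde Q_n$ every element of $\tilde Q_n$ is written uniquely as $\iota'(k)\theta'_n(q)$. Taking the transversal $\{\theta'_n(q):q\in Q_n\}$ gives the standard model of the induced representation on $V_n=\ell^2(Q_n)$, in which the action is
\[
\pi_n(\tilde g)e_q=\chi\bigl(\iota'(k)\bigr)\,e_{\varphi'_n(\tilde g)q},
\qquad\text{where }\tilde g\,\theta'_n(q)=\iota'(k)\,\theta'_n(\varphi'_n(\tilde g)q).
\]
Specializing to $\tilde g=\theta'_n(q')$, this reads $\pi_n(\theta'_n(q'))e_q=\exp(2\pi i\sigma'_n(q',q)/n)\,e_{q'q}$, where $\sigma'_n\in C^2(Q_n;\Z/n\Z)$ is the cocycle of the extension $1\to\Z/n\Z\to\tilde Q_n\to Q_n\to1$ read off from $\theta'_n$.

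Next I would compare the two sections $\tilde q_n\circ\theta$ and $\theta'_n\circ q_n$, which both land on lifts of $q_n(g)\in Q_n$ to $\tilde Q_n$; their discrepancy is an element of $\iota'(\Z/n\Z)$, so there is a function $\alpha_n:\Gamma\to\Z$ (well-defined mod $n$) with
\[
\tilde q_n(\theta(g))=\iota'(\alpha_n(g))\,\theta'_n(q_n(g)).
\]
I would then feed $\theta(g_1)\theta(g_2)=\iota(\sigma(g_1,g_2))\theta(g_1g_2)$ through $\tilde q_n$, use the central factorization above twice on each side, and read off the coboundary identity
\[
\sigma(g_1,g_2)\equiv q_n^{\#}(\sigma'_n)(g_1,g_2)+\partial\alpha_n(g_1,g_2)\pmod n.
\]
Setting $\sigma_n:=q_n^{\#}(\sigma'_n)$ (lifted to $\Z$ as in the hypotheses of Proposition~\ref{rhon}), this is precisely the cochain relation $\sigma_n+\partial\alpha_n=\sigma$ up to an irrelevant $\Z$-valued 2-coboundary, and it shows that $\sigma_n$ is of $n$-$Q_n$ type with the claimed representative.

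Finally I would compute $\rho_n(g_1)\bar g_2=\pi_n(\tilde q_n(\theta(g_1)))e_{q_n(g_2)}$. Substituting $\tilde q_n(\theta(g_1))=\iota'(\alpha_n(g_1))\theta'_n(q_n(g_1))$, centrality of $\iota'(\Z/n\Z)$ lets the $\iota'(\alpha_n(g_1))$ factor pass out as a scalar $\exp(2\pi i\alpha_n(g_1)/n)$, while the $\theta'_n$-factor contributes $\exp(2\pi i\sigma'_n(q_n(g_1),q_n(g_2))/n)\,e_{q_n(g_1g_2)}$ by the formula above. Combining the two scalars gives exactly $\hat\chi_n(g_1,g_2)\overline{g_1g_2}$, matching Proposition~\ref{rhon}.

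The main obstacle is purely bookkeeping: keeping track of the correct mod-$n$ versus integral identification of the cocycles $\sigma$, $\sigma_n$, $\sigma'_n$ and verifying that the ambiguity in choosing $\theta'_n$ and in lifting $\alpha_n$ from $\Z/n\Z$ to $\Z$ only changes $\rho_n$ by an overall unitary rescaling of the basis (hence leaves the representation unchanged up to unitary equivalence, which is all that is needed to match the formula).
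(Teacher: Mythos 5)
Your proof is correct and follows essentially the same plan as the paper's: realize $\pi_n$ concretely on the transversal $\{\theta'_n(q)\}$, compare the two lifts of $q_n(g)$ to $\tilde Q_n$ to produce $\alpha_n$, read off the coboundary identity, and match with the $\hat\chi_n$ formula. One organizational difference worth noting: where you work mod $n$ throughout and defer the lift of $\sigma_n$ and $\alpha_n$ to $\Z$ as ``bookkeeping,'' the paper resolves that bookkeeping up front by constructing a second section $\theta_n:\Gamma\to\tilde\Gamma$ satisfying $\tilde q_n\circ\theta_n=\theta'_n\circ q_n$ (explicitly $\theta_n(g)=\theta(g)\iota(1)^{-k}$ for the appropriate $k$), and then defining the $\Z$-valued $\sigma_n$ and $\alpha_n$ directly from $\theta_n$ and $\theta$; this makes them honest integral cochains satisfying $\sigma_n+\partial\alpha_n=\sigma$ in $\Z$, exactly as Proposition~\ref{rhon} requires, rather than only mod $n$. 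Your sketch of this step (``lifted to $\Z$ as in the hypotheses'') is where that construction belongs, so you are not missing an idea so much as a one-line device. As a small side note, you have the sign of the coboundary identity right ($\sigma=\sigma_n+\partial\alpha_n$); the published proof contains a typo at that point.
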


\begin{proof}

Then let $\omega_n$ be the character on $\langle \tilde{q}_n(\iota(1))\rangle$ defined by $\tilde{q}_n(\iota(1))\mapsto \exp\left(\frac{2\pi i}{n}\right)$. By \cite[Exercise 2.3.16]{rept}, one construction for the induced representation comes from the space
$$\bigoplus_{i=1}^n g_i\C\cong V_n$$
where one element $g_i$ is a representative of a different one of the $n$ cosets in~$\tilde{Q}_n/\langle\iota(1)\rangle$ for each~$i$. Here $V_n$ is the space as described in Proposition~\ref{rhon}. Then the induced representation is defined by
$$\pi_n(g)g_i=\omega_n(\Tilde{q}_n(\iota(1))^k)g_j$$
where $g_j$ and $k$ are determined by the formula $gg_i=g_j\Tilde{q}_n(\iota(1))^k$. This determines~$g_j$ uniquely and $k$ up to equivalence mod $n$. If we pick~$\theta_n'$ to be the map taking an element in $Q_n$ to the~$g_i$ in the coset corresponding to it we get that $g_j=\theta_n'(\varphi_n'(gg_i))$ and $\tilde{q}_n(\iota(1))^k=gg_i\theta_n'(\varphi_n'(gg_i))^{-1}$. This gives us
$$\pi_n(g)g_i=\omega_n\Big(gg_i(\theta_n'\circ\varphi_n'(gg_i))^{-1}\Big)\cdot\theta_n'\circ\varphi_n'(gg_i).$$
Then we may take $\theta_n$ to be another section from $\Gamma$ to $\Tilde{\Gamma}$ defined so that $\tilde{q}_n\circ\theta_n=\theta_n'\circ q_n$. To show that such a section exists note that for all $g\in\Gamma$ $\tilde{q}_n\circ\theta(g)=\tilde q_n\circ\iota(1)^k\theta_n'\circ q_n(g)$ for some $k\in\Z$. We then define $\theta_n(g)=\theta(g)\iota(1)^{-k}$. We define the $\Gamma$ cocycles $\sigma$, $\sigma_n$ and the $\Gamma$ 1-cochain $\alpha_n$ as follows:
\begin{align*}
\iota(1)^{-\sigma(g,h)}&=\theta(gh)\theta(h)^{-1}\theta(g)^{-1}\\
\iota(1)^{-\sigma_n(g,h)}&=\theta_n(gh)\theta_n(h)^{-1}\theta_n(g)^{-1}\\
\iota(1)^{-\alpha_n(g)}&=\theta_n(g)\theta(g)^{-1}.
\end{align*}
We claim that $\sigma_n$ is of $n$-$Q_n$ type. To show this
\begin{align*}
\tilde{q}_n(\iota(1))^{\sigma_n(g,h)}&=\tilde{q}_n(\theta_n(gh)\theta_n(h)^{-1}\theta_n(g)^{-1})\\
&=\theta_n'(q_n(gh))\theta_n'(q_n(h))^{-1}\theta_n'(q_n(g))^{-1}
\end{align*}
Since the order of $\tilde{q}_n(\iota(1))$ is $n$ it follows that $\sigma_n(g,h)$ reduced mod $n$ can be computed the same way as $q_n^\#$ applied to the cocycle corresponding to $\theta_n'$. Then note that by \cite[Chapter~IV.3 Equation~3.11]{coho} we have that $\sigma_n=\sigma+\partial\alpha_n$. Note that $V_n$ is isomorphic to the vector space for~$\pi_n$ by mapping
$$h\mapsto\theta_n'(h).$$
So we have
$$
\pi_n\circ\Tilde{q}_n\circ\theta(g)\cdot\theta_n'(h)=\omega_n\Big(\Tilde{q}_n\circ\theta(g)\cdot\theta_n'(h)\cdot\big(\theta_n'\circ\varphi_n'(\Tilde{q}_n\circ\theta(g)\cdot\theta_n'(h))\big)^{-1}\Big)\cdot\theta_n'\circ\varphi_n'\Big(\Tilde{q}_n\circ\theta(g)\cdot\theta_n'(h)\Big).$$
To simplify this expression we note first that
\begin{align*}
\varphi_n'\Big(\Tilde{q}_n\circ\theta(g))\cdot\theta_n'(h)\Big)&=\varphi_n'\circ\Tilde{q}_n\circ\theta(g)\cdot\varphi_n'\circ\theta_n'(h)\\
&=q_n\circ\varphi\circ\theta(g)\cdot h\\
&=q_n(g)\cdot h.
\end{align*}
Moreover
\begin{align*}
\Tilde{q}_n\circ\theta(g)&=\Tilde{q}_n(\iota(1)^{\alpha_n(g)}\cdot\theta_n(g))\\
&=\Tilde{q}_n(\iota(1)^{\alpha_n(g)})\cdot\Tilde{q}_n\circ\theta_n(g)\\
&=\Tilde{q}_n(\iota(1)^{\alpha_n(g)})\cdot\theta_n'\circ q_n(g).
\end{align*}
This gives us the formula
$$\pi_n\circ\Tilde{q}_n\circ\theta(g)\cdot\theta_n'(h)=\omega_n\Big(\Tilde{q}_n(\iota(1))^{\alpha_n(g)}\theta_n'(q_n(g))\cdot\theta_n'(h)\cdot\theta_n'(q_n(g)h)^{-1}\Big)\cdot \theta_n'\big(q_n(g)\cdot h\big).$$
Supposing that $h=q_n(a)$ we can rewrite this as
\begin{align*}
\pi_n\circ\Tilde{q}_n\circ\theta(g)\cdot\theta_n'\circ q_n(a)&=\omega_n\Big(\Tilde{q}_n(\iota(1))^{\alpha_n(g)}\theta_n'\circ q_n(g)\cdot\theta_n'\circ q_n(a)\cdot\theta_n'\circ q_n(ga)^{-1}\Big)\cdot\theta_n'\circ q_n(ga)
\\
&=\omega_n\Big(\tilde{q}_n(\iota(1)^{\alpha_n(g)})\tilde{q}_n\big(\theta_n(g)\cdot\theta_n(a)\cdot\theta_n(ga)^{-1}\big)\Big)\cdot\theta_n'\circ q_n(ga)\\
&=\omega_n(\tilde{q}_n(\iota(1)^{\alpha_n(g)}\iota(1)^{\sigma_n(g,a)})\cdot\theta_n'\circ q_n(ga)\\
&=\exp\left(\frac1n(\alpha_n(g)+\sigma_n(g,a))\right)\cdot\theta_n'\circ q_n(ga).
\end{align*}
In the notation of Proposition~\ref{rhon} the $q_n(ga)$ would be written in the form $\bar{g}\bar{a}$ and $\bar{g}\mapsto\theta_n'(g)$ describes the isomorphism of $V_n$ with the vector space on which the induced representation is defined. Thus this recovers the formula from Proposition~\ref{rhon}.
\end{proof}

\section{Examples}

\subsection{$\Z^2$ Revisited}
In this subsection we will apply our results to $\Z^2$, the simplest nontrivial example. We will compare the result of our algorithm to the classical results and show that we get Voiculescu's matrices tensored against another representation. Using remark~\ref{dimReduce} we obtain Voiculescu's matrices precisely. In Example~\ref{z2cos} we have shown that
$$\sigma((x_1,x_2),(y_1,y_2))=x_2y_1$$
is a cocycle on $\Z^2$ and
$$c=[(0,1)|(1,0)]-[(1,0)|(0,1)]$$
is a 2-chain such that
$$\langle\sigma,c\rangle=1.$$
Moreover, since $\sigma$ is a polynomial with integer coefficients it follows that $\sigma$ is $\Z/n\Z$-compatible with $(\Z/n\Z)^2$. Then applying Theorem~\ref{nil} we get that $\rho_n$ acts on $V_n=\ell^2((\Z/n\Z)^2)\cong\C^n\tensor\C^n$. Then pick the basis $\{e_j\tensor e_k\}$ with~$e_j$ defined for all $j\in\Z$ by the formula $e_{j+n}=e_j$. Using the formula for Theorem~\ref{nil} we get that
$$\rho_n(a,b)e_j\tensor e_k=\exp\left(\frac{2\pi i}{n}bj\right)e_{a+j}\tensor e_{b+k}.$$
Note that we may write $\rho_n(a,b)=\rho_n^1(a,b)\tensor\rho_n^2(a,b)$ where
$$\rho_n^1(a,b)e_j=\exp\left(\frac{2\pi i}{n}bj\right)e_{a+j}$$
and
$$\rho_n^2(a,b)e_j=e_{b+j}.$$
Note that $\rho_n^1$ is precisely the asymptotic representation given by Voiculescu's matrices while $\rho_n^2$ is a genuine representation. This is unsurprising because Remark~\ref{dimReduce} allows us to reduce the dimension by ``ignoring'' the second tensor coordinate and the resulting formula is precisely $\rho_n^1$.

\subsection{A 3-Step Nilpotent Group}

Consider the group $\Gamma$ generated by $a_1,\ldots ,a_5$ with the following relations
\begin{align*}
a_2a_1&=a_1a_2a_3\\
a_3a_1&=a_1a_3a_4^{2}\\
a_3a_2&=a_2a_3a_5\\
a_ia_j&=a_ja_i&\mbox{for all other pairs $\{i,j\}$.}
\end{align*}
We first state a simplified version of our formula, then we explain how to compute it. We first compute the general version, then explain how it simplifies in this case. Our asymptotic representation is defined for $n$ co-prime to 6 and sends generators to the following $\C^n$ spanned $e_i$ with $i\in\Z$ and $e_{i+n}=e_i$:
\begin{align*}
\rho_n'(a_1)e_j&=e_{j+1}\\
\rho_n'(a_2)e_j&=\exp\left(\frac{4\pi i}{n}{j\choose 3}\right)e_j\\
\rho_n'(a_3)e_j&=\exp\left(\frac{4\pi i}{n}{j\choose 2}\right)e_j\\
\rho_n'(a_4)e_j&=\exp\left(\frac{2\pi i}{n}j\right)e_j\\
\rho_n'(a_5)&=\Id_{\C^n}.
\end{align*}
And sends the element written uniquely in the form $a_1^{x_1}\cdots a_5^{x_5}\mapsto\rho_n'(a_1)^{x_1}\cdots\rho_n'(a_5)^{x_5}$. Here ${j\choose 3}$ is the polynomial $\frac16j(j-1)(j-2)$. This group can be concretely realized as $\Z^5$ with multiplication given by
$$(x_1,\ldots, x_5)*(y_1,\ldots y_5)=(\eta_1(\mathbf{x,y}),\ldots,\eta_5(\mathbf{x,y}))$$
where
\begin{align*}
\eta_1(\mathbf{x,y})&=x_1+y_1\\
\eta_2(\mathbf{x,y})&=x_2+y_2\\
\eta_3(\mathbf{x,y})&=x_3+y_3+x_2y_1\\
\eta_4(\mathbf{x,y})&=x_4+y_4+2x_3y_1+2x_2{y_1\choose 2}\\
\eta_5(\mathbf{x,y})&=x_5+y_5+y_1{x_2\choose 2}+x_3y_2+x_2y_1y_2
\end{align*}
by the isomorphism
$a_1^{x_1}\cdots a_5^{x_5}\mapsto(x_1,\ldots,x_5)$. In general these polynomials may be calculated by methods given in~\cite{deep}. We will explain how to verify that these polynomials by computer. There is a full description of the code in the appendix, but we will summarize the main steps here.

\begin{enumerate}
    \item Verify that the operation ``$*$'' defined above is associative.
    \item Calling $a_i$ the element with a 1 in the $i$th entry, and zeroes elsewhere, verify that $a_1^{x_1}\cdots a_5^{x_5}=(x_1,\ldots,x_5)$ under the operation $*$.
    \item Use $*$ to compute $a_5^{-x_5}a_4^{-x_4}\cdots a_1^{-x_1}$.
    \item Verify that the formula computed in the previous step is both a left, and a right inverse to $(x_1,\ldots,x_5)$.
    \item Verify that $a_1,\ldots,a_5$ satisfies the relations of the group.
\end{enumerate}

In order to compute a non-torsion cocycle we will develop one as a central extension. We will do this by ``blowing up'' the relation $[a_4,a_1]=e$. Thus we get a group~$\tilde{\Gamma}$ generated by $\tilde{a}_1,\ldots,\tilde{a}_6$ with the relations
\begin{align*}
\tilde a_2\tilde a_1&=\tilde a_1\tilde a_2\tilde a_3\\
\tilde a_3\tilde a_1&=\tilde a_1\tilde a_3\tilde a_4^{2}\\
\tilde a_3\tilde a_2&=\tilde a_2\tilde a_3\tilde a_5\\
\tilde a_4\tilde a_1&=\tilde a_1\tilde a_4\tilde a_6\\
\tilde a_i\tilde a_j&=\tilde a_j\tilde a_i&\mbox{for all other pairs $\{i,j\}$.}
\end{align*}

\begin{rmk}\label{warning}
The reader is warned that such an extension cannot be made for any homogeneous relation in any torsion-free finitely generated nilpotent group. Consider the group $\Lambda$ generated by $b_1,\ldots, b_4$ with the relations
\begin{align*}
[b_1,b_2]&=b_3\\
[b_i,b_j]&=e&\mbox{for all other pairs $\{i,j\}$.}
\end{align*}
In this case the relation $[b_3,b_4]=1$ follows from the other three relations so we cannot construct a central extension by ``blowing it up.'' We will explain why this issue does not arise in our example below. In general an algorithm for finding when relations of this form make a nilpotent group where each ``$a_i$'' has infinite order is described in \cite[Proposition 9.8.3, Proposition 9.9.1]{sims}.
\end{rmk}
Then $\Tilde{\Gamma}$ can be identified with $\Z^6$ with the multiplication is given by 
$$(x_1,\ldots,x_6)*(y_1,\ldots,y_6)=(\gamma_1(\mathbf{x,y}),\ldots,\gamma_6(\mathbf{x,y}))$$
where
\begin{align*}
\gamma_i(\mathbf{x,y})&=\eta_i(x_1,\ldots x_5,y_1,\ldots y_5)&\mbox{for }i<6\\
\gamma_6(\mathbf{x,y})&=x_6+y_6+x_4y_1+2x_3{y_1\choose 2}+2x_2{y_1\choose 3}
\end{align*}
We have verified the fact that these polynomials give rise to a group operation satisfying the relations of the group with similar code to what we used to verify these things for $\Gamma$. Since the element $(0,0,0,0,0,1)$ has infinite order in the group determined by these polynomials it follows that $\tilde a_6$ has infinite order as well.

From this it follows that a cocycle corresponding to the central extension is given by
$$\sigma(\mathbf{x,y})=x_4y_1+2x_3{y_1\choose 2}+2x_2{y_1\choose 3}.$$

Let $c$ be the 2-cycle

$$c=[a_1|a_4]-[a_4|a_1]$$
$$=[(1,0,0,0,0)|(0,0,0,1,0)]-[(0,0,0,1,0)|(1,0,0,0,0)]$$
Then $\langle\sigma,c\rangle=1$.

For any $n$ co-prime to 6 we may define
$$Q_n=(\Z/n\Z)^5$$ with multiplication given by
$$(x_1,\ldots, x_n)*(y_1,\ldots,y_n)=(\bar\eta_1(\mathbf{x,y}),\ldots,\bar\eta_5(\mathbf{x,y}))$$
where $\bar\eta_i$ is $\eta_i$ with each coefficient reduced mod $n$.\footnote{Any $\frac12$ coefficient will be the corresponding inverse of 2 mod $n$ which exists since $n$ is co-prime to 6.} Then reducing each coefficient of $\sigma$ mod $n$ we get $\bar{\sigma}$. Note that the fact that $\bar{\sigma}$ is a $\Z/n\Z$-valued cocycle on $Q_n$ implies that $\sigma$ is of $Q_n$-$n$ type. Then we can use the formula from Remark~\ref{simplify} to define our asymptotic representation. Note that $\ell^2(Q_n)\cong\ell^2(\Z/n\Z)^{\tensor 5}.$ Treat $\{e_i\}_{i=0,\ldots,n-1}$ as a basis for $\ell^2(\Z/n\Z)$. For ease of notation we will extend $e_i$ to be well-defined for all $i\in\Z$ by the formula $e_{i+n}=e_i$. Thus we get
$$\rho_n(x_1,\ldots,x_5)e_{y_1}\tensor e_{y_2}\tensor\cdots\tensor e_{y_5}=\exp\left(\frac{2\pi i\sigma(\mathbf{x,y})}{n}\right)e_{\eta_1(\mathbf{x,y})}\tensor e_{\eta_2(\mathbf{x,y})}\tensor\cdots\tensor e_{\eta_5(\mathbf{x,y})}.$$
Theorem~\ref{coc2} guarantees that this formula is well-defined. As in Remark~\ref{dimReduce} we can ``ignore'' the last tensor coordinate since $\sigma(\mathbf{x,y})$ does not depend on~$y_5$ and neither does $\eta_i$ for $i<5$. This gives us an $n^4$ dimensional asymptotic representation. In this particular case we may go much further. It turns out that $\sigma$ depends only on $\mathbf{x}$ and $y_1$ so we can ignore every tensor coordinate except the first. This gives us the asymptotic representation $\rho_n'$ we introduced in the start of the section.

\subsection{A Polycyclic Group}

Let $\Gamma=\Z^2\rtimes\Z$ where the action of $\Z$ on $\Z^2$ is given by ``Arnold's Cat Map:''
$$1\mapsto T=\begin{bmatrix}2&1\\1&1\end{bmatrix}\in\Gl_2(\Z)=\Aut(\Z^2).$$
The generators are $a_1,a_2,a_3$ with the relations
\begin{align*}
a_2a_1&=a_1a_2\\
a_3a_1&=a_1^2a_2a_3\\
a_3a_2&=a_1a_2a_3.
\end{align*}
A simplified version of our asymptotic representation is given on $\C^n\tensor\C^n$ with basis $\{e_j\tensor e_k\}_{j,k\in\Z}$ and the convention that $e_{j+n}=e_j$. With this notation the generators are sent to the following operators
\begin{align*}
\rho_n'(a_1)e_j\tensor e_k&=e_{j+1}\tensor e_k\\
\rho_n'(a_2)e_j\tensor e_k&=\exp\left(\frac{2\pi i}{n}j\right)e_j\tensor e_{k+1}\\
\rho_n'(a_3)e_j\tensor e_k&=\exp\left(\frac{2\pi i}{n}\left(jk+j^2+\frac12k^2-j-\frac12k\right)\right)e_{2j+k}\tensor e_{j+k}.
\end{align*}

As in the last chapter we will explain how to compute the asymptotic representation given by Proposition~\ref{rhon}, then explain how to derive the simpler formula $\rho_n'$.

We will compute a non-torsion cocycle in $H^2(\Gamma;\Z)$. We explain our reasoning about how to find the cocycle without formal proof then show formally that it obeys the cocycle condition. The idea is as in the previous section to find a central extension of $\Gamma$, compute the multiplication in the middle group of the central extension. We may consider the following presentation of $\Gamma$. 
Each element in the group can be written uniquely as $a_1^{x_1}a_2^{x_2}a_3^{x_3}$ and this element will be sent to the corresponding product of matrices. 

We may make an extension by ``blowing up'' the relation $[a_1,a_2]=1$. Then we may consider $\Tilde{\Gamma}$ to be the group generated by $b_1,\ldots,b_4$ and the relations
\begin{align*}
b_2b_1&=b_4\\
b_3b_1&=b_1^2b_2b_3\\
b_3 b_2&=b_1b_2b_3\\
b_4b_i&=b_ib_4&\mbox{for all $i$.}
\end{align*}
Using these relations we may write any element of $\Tilde{\Gamma}$ uniquely in the form $b_1^{x_1}\cdots b_4^{x_4}$ with $x_i\in\Z$. As the reader is warned in Remark~\ref{warning} it is not always the case that ``blowing up'' a relation like this leads to a sensible extension. In this case when we verify the cocycle condition we will also have verified that this makes a sensible extension. 

We will describe an element of $\Gamma$ implicitly by a pair $(v,k)$ with $v=(v^1,v^2)\in\Z^2$ and $k\in\Z$ then by the definition of the semi-direct product the multiplication is given implicitly by $(v_1,k_1)*(v_2,k_2)=(v_1+T^{k_1}v_2,k_1+k_2)$. Our goal is to implicitly describe $\Tilde{\Gamma}$ similarly. To that end we will describe an element of $\Tilde{\Gamma}$ as a triple $(v,k,d)$ with $v=(v^1,v^2)\in\Z^2$ and $k,d\in\Z$. This represents the element $b_1^{v^1}b_2^{v^2}b_3^{k}b_4^{d}$. To that end we make the following observations:
\begin{align*}
b_2^{v^2}b_1^{v^1}&=b_1^{v^1}b_2^{v^2}b_3^{v^1v^2}&(5.1)\\
b_3a_1^{v^1}&=(b_1^2b_2)^{v^1}b_3\\
&=b_1^{2v^1}b_2^{v^1}b_3b_4^{v^1(v^1-1)}&(5.2)\\
b_3a_2^{v^2}&=(b_1b_2)^{v^2}b_3\\
&=b_1^{v^2}b_2^{v^2}b_3b_4^{\frac12v^2(v^2-1)}&(5.3)\\
b_3b_1^{v^1}b_2^{v^2}&=b_1^{2v^1}b_2^{v^1}b_1^{v^2}b_2^{v^2}b_3a_4^{v^1(v^1-1)+\frac12v^2(v^2-1)}\\
&=b_1^{2v^1+v^2}b_2^{v^1+v^2}b_3a_4^{v^1(v^1-1)+\frac12v^2(v^2-1)}b_4^{v^1v^2}.&(5.4)
\end{align*}
These essentially describe the ways we can get ``$b_4$ terms.'' We will informally refer to the contributions ``(5.1) terms,'' ``(5.2) terms,'' ``(5.3) terms,'' and ``(5.4) terms.'' The ``(5.4) terms'' will refer to the terms in (5.4) that do not appear in (5.2) or (5.3). In order to capture these terms we define the $\alpha,\beta:\Q^2\tensor\Q^2\rightarrow\Q$ and $\gamma:\Q^2\rightarrow\Q$ as follows:
\begin{align*}
\alpha(v_1\tensor v_2)&=v_1^1v_2^2\\
\beta(v_1\tensor v_2)&=\frac12v_1^2v_2^2+v_1^1v_2^1\\
\gamma(v_1)&=v_1^1+\frac12v_1^2.
\end{align*}
Thus we have shown
$$(v_1,1,0)*(v_2,0,0)=(v_1+Tv_2,1,\alpha(v_2\tensor v_1+v_2\otimes v_2)+\beta(v_2\tensor v_2)-\gamma(v_2)).$$
Note that although $\beta$ and $\gamma$ take rational values in general, $\beta(v_2\tensor v_2)-\gamma(v_2)$ is always an integer. Here the input to $\alpha$ comes from the (5.1) and (5.4) terms while the inputs to $\beta$ and $\gamma$ come from the (5.2) and (5.3) terms. To compute the product in general for positive values of $k_1$ the steps would look like
\begin{align*}
(v_1,k_1,0)*(v_2,k_2,0)=&(v_1,k_1-1,0)*(Tv_2,k_2+1,(\alpha+\beta)(v_2\tensor v_2)-\gamma(v_2))\\
=&(v_1,k_1-2,0)*(T^2v_2,k_2+2,\alpha(v_2\tensor v_2+Tv_2\tensor Tv_2)-\gamma(v_2+Tv_2))\\
=&(v_1,k_1-3,0)\\
&*(T^3v_2,k_2+3,\alpha(v_2\tensor v_2+Tv_2\tensor Tv_2+T^2v_2\tensor T^2v_2)\\
&-\gamma(v_2+Tv_2+T^2v_2))
\end{align*}
and so on. This motivates the definition
$$S_k=\begin{cases}\sum_{j=0}^{k-1}(T\tensor T)^j&k\ge 0;\\
-\sum_{j=k+1}^{-1}(T\tensor T)^j&k<0.
\end{cases}$$
We cannot use the exponential sum formula to get a closed form for $S_k$ because $T\tensor T-1$ is not invertible. However $T-1$ is invertible, so we may write a closed form for the analogue of $S_k$ in the linear terms. We have done enough to motivate our definition of the cocycle. It comes from keeping track of each of the ``$b_3$ terms'' when computing multiplication in $\tilde\Gamma$. Note an element of $\Gamma$ as $g_i=(v_i,k_i)=((v_i^1,v_i^2),k_i)$ so that $v_i\in\Z^2$ represent the element $a_1^{v_i^1}a_2^{v_i^2}a_3^{k_i}$. We define
\begin{align*}
\sigma_1((v_1,k_1),(v_2,k_2))&=\alpha(T^{k_1}v_2\tensor v_1)\\
\sigma_2((v_1,k_1),(v_2,k_2))&=\alpha((S_{k_1}(v_2\tensor v_2))\\
\sigma_3((v_1,k_1),(v_2,k_2))&=\beta(S_{k_1}(v_2\tensor v_2))\\
\sigma_4((v_1,k_1),(v_2,k_2))&=\gamma((T-1)^{-1}(T^{k_1}-1)v_2))
\end{align*}
Then we define our cocycle
$$\sigma=\sigma_1+\sigma_2+\sigma_3-\sigma_4.$$
Note that $\sigma_4$ is subtracted unlike the others. Here $\sigma_1$ comes from the ``(5.1) terms,'' $\sigma_2$ comes from the ``(5.4) terms,'' and $\sigma_3-\sigma_4$ comes from the ``(5.2) and (5.3) terms." Before we compute $\partial\sigma$ we observe the following identities about~$S_k$
\begin{align*}
S_{k_1+k_2}&=S_{k_1}+(T\tensor T)^{k_1}S_{k_2}\\
(T\tensor T-1)S_k&=(T\tensor T)^k-1
\end{align*}
Now we compute $\partial \sigma$ piece by piece. We will let $g_i\in\Gamma$ be represented as the pair $(v_i,k_i)$. Then we compute
\begin{align*}
\partial\sigma_1(g_1,g_2,g_3)=&\alpha(-T^{k_1}v_2\tensor v_1+T^{k_1}(v_2+T^{k_2}v_3)\tensor v_1\\
&-T^{k_1+k_2}v_3\tensor(T^{k_1}v_2+v_1)+T^{k_2}v_3\tensor v_2)\\
=&\alpha(-T^{k_1+k_2}v_3\tensor T^{k_1}v_2+T^{k_2}v_3\tensor v_2)\\
=&-\alpha(((T\tensor T)^{k_1}-1)(T^{k_2}v_3\tensor v_2))\\
=&-\alpha((T\tensor T-1)S_{k_1}(T^{k_2}v_3\tensor v_2)).
\end{align*}
Next
\begin{align*}
\partial(\sigma_2+\sigma_3)(g_1,g_2,g_3)=&(\alpha+\beta)(-S_{k_1}(v_2\tensor v_2)+S_{k_1}((v_2+T^{k_2}v_3)\tensor(v_2+T^{k_2}v_3))\\
&-S_{k_1+k_2}(v_3\tensor v_3)+S_{k_2}(v_3\tensor v_3))\\
=&(\alpha+\beta)(-S_{k_1}(v_2\tensor v_2)+S_{k_1}((v_2+T^{k_2}v_3)\tensor(v_2+T^{k_2}v_3))\\
&-S_{k_2}(v_3\tensor v_3)-S_{k_1}(T^{k_2}v_3\tensor T^{k_2}v_3)+S_{k_2}(v_3\tensor v_3))\\
=&(\alpha+\beta)(S_{k_1}(v_2\tensor T^{k_2}v_3+T^{k_2}v_3\tensor v_2)).
\end{align*}
Finally
\begin{align*}
\partial \sigma_4(g_1,g_2,g_3)=&\gamma((T-1)^{-1}(-(T^{k_1}-1)v_2+(T^{k_1}-1)(T^{k_2}v_3+v_2)\\
&-(T^{k_1+k_2}-1)v_3+(T^{k_2}-1)v_3)\\
=&0.
\end{align*}
Let
$$S_{k_1}(T^{k_2}v_3\tensor v_2)=\sum_{i=1}^2u_i\tensor w_i.$$
Note that since $S_{k_1}$ commutes with the map $u\tensor w\mapsto w\tensor u$ by construction so we have that
$$S_{k_1}(v_2\tensor T^{k_2} v_3)=\sum_{i=1}^2w_i\tensor u_i.$$
Now we have
$$\partial\sigma(g_1,g_2,g_3)=\sum_{i=1}^2\Big(\alpha(-(T\tensor T)(u_i\tensor w_i)+2u_i\tensor w_i+w_i\tensor u_i)+\beta(u_i\tensor w_i+w_i\tensor u_i)\Big).$$
Next we see from the definition of $T$ and $\alpha$ that
$$\alpha((T\tensor T)u_i\tensor w_i)=(2u_i^1+u_i^2)(w_i^2+w_i^1)=2u_i^1w_i^1+2u_i^1w_i^2+u_i^2w_i^1+u_i^2w_i^2.$$
Similarly
\begin{align*}
\alpha(2u_i\tensor w_i)&=2u_i^1w_i^2\\
\alpha(w_i\tensor u_i)&=u_i^2 w_i^1\\
\beta(u_i\tensor w_i+w_i\tensor u_i)&=2\beta (u_i\tensor w_i)\\
&=2u_i^1w_i^1+u_i^2w_i^2
\end{align*}
Thus $\partial\sigma=0$. Secondly we have the 2 chain
$$c=[a_2|a_1]-[a_1|a_2]=[((0,1),0)|((1,0),0)]-[((1,0),0)|((0,1),0)].$$
Then since $S_0=0$ and $T^0-1=0$ we have that $\sigma_1$ is the only one of the forms to pair nontrivially with $c$. Thus we see
$$\langle\sigma,c\rangle=1.$$
We next investigate finite quotients of $\Gamma$. If $n,m\in\N^+$ so that the order of $T$ reduced mod $n$ in $\Gl_2(\Z/n\Z)$ divides $m$ then a finite quotient of $\Gamma$ can be of the form
$$(\Z/n\Z)^2\rtimes\Z/m\Z$$
with the action described by the reduction of $T$ mod $n$. Our goal is to find finite quotients $Q_n$ of this form so that $\sigma$ is $n$-$Q_n$ compatible. To do this note that the pair $(S_{k\pm1},T^{k\pm1})$ can be determined from the pair $(S_k,T^k)$ and the entries of $(S_{k\pm1},T^{k\pm1})$ are polynomials in the entries of $(S_k,T^k)$. These polynomials may be reduced mod $n$. So it follows that if we pick $m$ (depending on $n$) so that $S_m\equiv S_0\mod n$ and $T^m\equiv 1\mod n$ we have that
$$(S_k,T^k)\equiv (S_{k+m},T^{k+m})\mod n$$
for all $k$, by induction on $|k|$. It follows that the order of the order of $T$ reduced mod $n$ in $\Gl_2(\Z/n\Z)$. Thus for odd $n$ we define
$$Q_n=(\Z/n\Z)\rtimes\Z/m\Z.$$
Since $n$ is odd we may express the $\frac12$s in the definition of $\sigma$ as the inverse of~2 mod~$n$. We may reduce the rest of the operations mod $n$ easily and the operators~$T^k$ and $S_k$ are $m$-periodic, so the formula for $\sigma$ determines a cocycle~$\sigma'$ which defines a cohomology class in $H^2(Q_n;\Z/n\Z)$. Thus $\sigma$ is of finite type. Our formula for the representation then acts on the space $\C^n\tensor\C^n\tensor\C^m$. We can get a formula from Proposition~\ref{rhon}, but this formula is messy, since our definition for $\sigma$ is messy. For that reason we will simply check what the generators do:
\begin{align*}
\rho_n(a_1)e_j\tensor e_k\tensor e_\ell&=e_{j+1}\tensor e_k\tensor e_\ell\\
\rho_n(a_2)e_j\tensor e_k\tensor e_\ell&=\exp\left(\frac{2\pi i}{n}j\right)e_j\tensor e_{k+1}\tensor e_\ell\\
\rho_n(a_3)e_j\tensor e_k\tensor e_\ell&=\exp\left(\frac{2\pi i}{n}\left(jk+j^2+\frac12k^2-j-\frac12k\right)\right)e_{2j+k}\tensor e_{j+k}\tensor e_{\ell+1}.
\end{align*}
We can see that this must be an asymptotic representation tensored against a genuine representation in the third tensor coordinate. Thus we can pick a smaller representation by ``ignoring'' the $e_\ell$ part. This gives us the list of formula $\rho_n'$ from the start of the subsection.

\appendix

\section{Code to Check Multiplication Polynomials}

We will show sage code that verifies these polynomial give rise to an associative relation that obeys the given presentation of the group, by the isomorphism described above. We first enter the polynomials:
\begin{center}
\includegraphics[scale=.85]{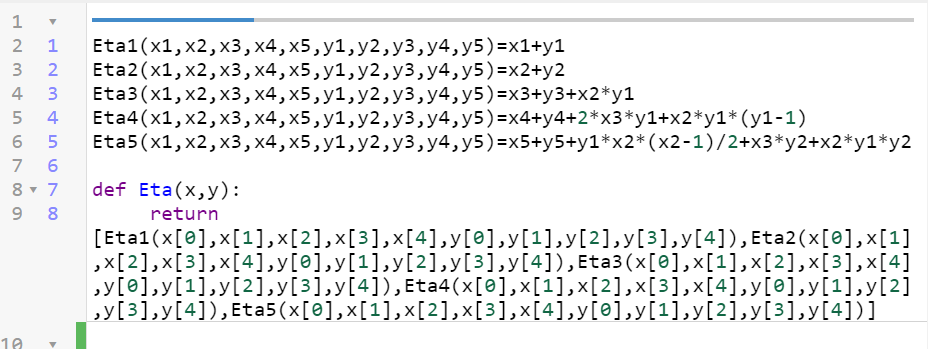}
\end{center}

Here the function ``Eta'' should take in 2 lists of 5 numbers or algebraic expressions (representing two elements of the group) and apply the 5 polynomials to them, outputting another list of 5 elements (representing the product of those two elements). Next we check associativity:
\begin{center}
\includegraphics[scale=.85]{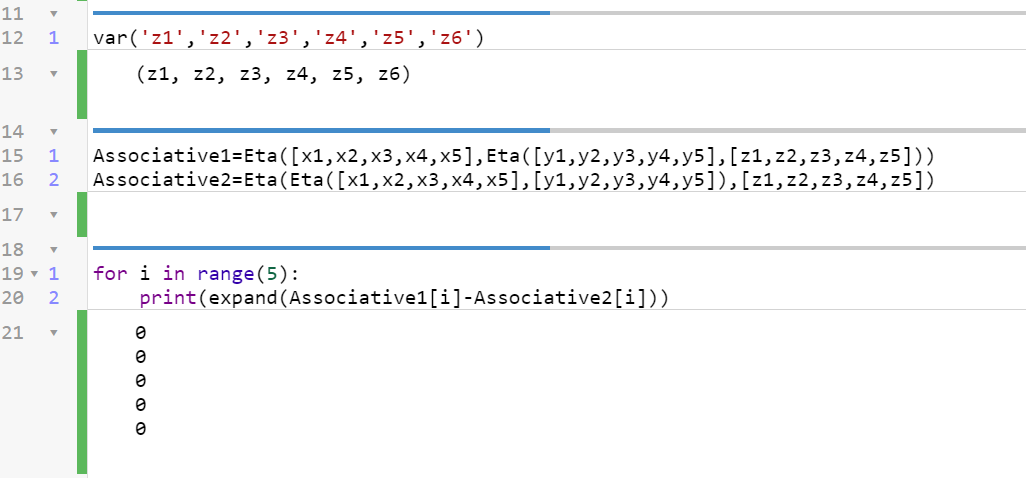}
\end{center}
The first line of code is needed to make sage treat $z_1,\ldots,z_6$ as algebraic expressions. The next two lines compute $\mathbf{x*(y*z})$ and $\mathbf{(x*y)*z}$ respectively. The for loop checks that these expressions are equivalent in each coordinate. It is easy to see by inspecting the polynomials that $(1,0,0,0,0)^{x_1}=(x_1,0,0,0,0)$, $(0,1,0,0,0)^{x_2}=(0,x_2,0,0,0)$, and so on. The following code, assumes this fact and checks that the relation $(x_1,\ldots,x_5)=a_1^{x_1}\cdots a_5^{x_5}$ makes sense with $a_i$ corresponding to the vector that has a 1 in the $i$th place and zeroes elsewhere:
\begin{center}
\includegraphics[scale=.85]{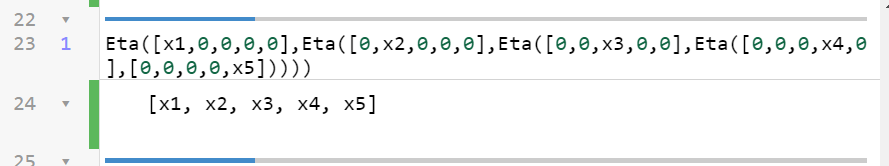}
\end{center}
Next we must check the existence of inverses:
\begin{center}
\includegraphics[scale=.85]{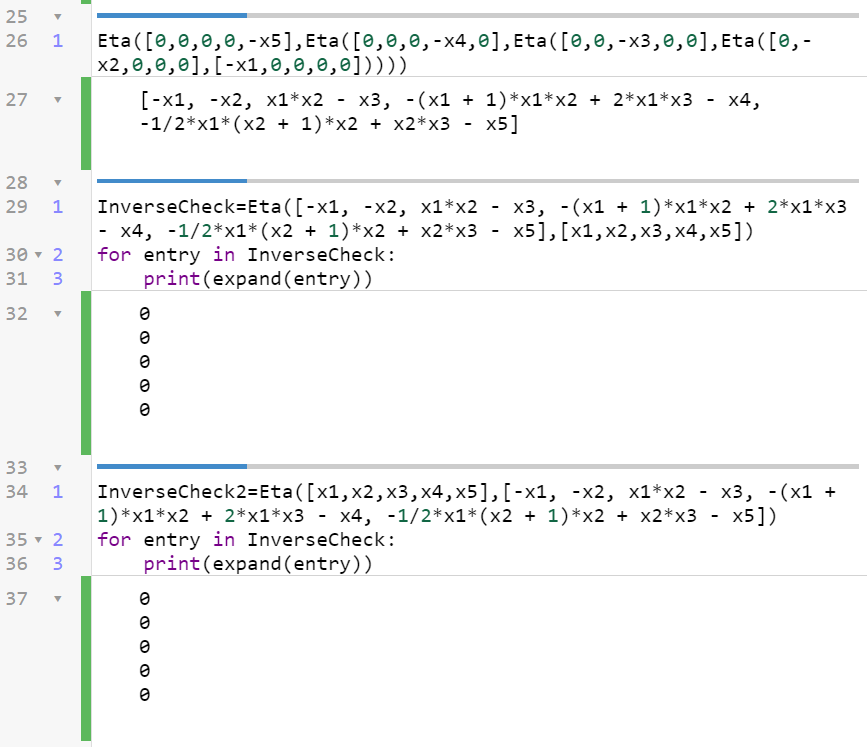}
\end{center}
The first line of code computes what the inverse of $(x_1,\ldots,x_5)$ must be if it exists. The next two lines verify that this is in fact, both a left and right inverse, respectively. Finally, we need to check that these relations satisfy the the presentation of the group. The first few of the computations look like this

\begin{center}
\includegraphics[scale=.85]{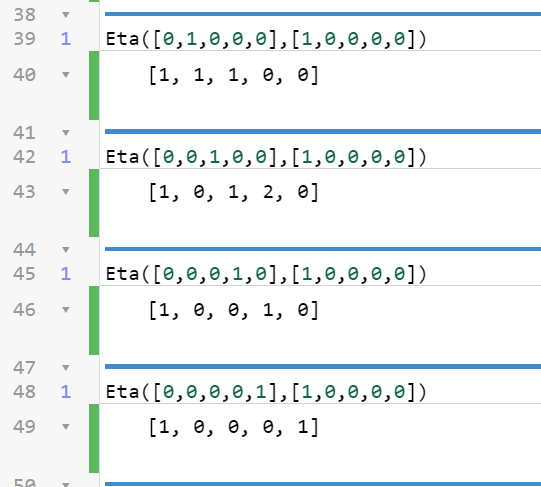}
\end{center}

These lines verify the relations $a_2a_1=a_1a_2a_3$, $a_3a_1=a_1a_3a_4^2$, and $a_5a_1=a_1a_5$, respectively. The rest of the relations may be checked with similar code.

\begin{center}
\textbf{Acknowledgments}
\end{center}
I would like to thank my advisor Marius Dadarlat for introducing me to the problem and for all his other advice. I would like to thank Ben McReynolds and Mark Pengitore for answering questions I had. I would also like to thank the Purdue University mathematics department for supporting me in the summer of 2021 with the summer research grant. Finally, I would like to thank an anonymous referee for their hard work and many helpful comments.

\medskip
\bibliographystyle{plain}
\bibliography{main}
\end{document}